\newtheorem{theorem}{Theorem}[section]
\newtheorem{lemma}[theorem]{Lemma}
\newtheorem{corollary}[theorem]{Corollary}
\newtheorem{algorithm}{Algorithm}
\newenvironment{definition}[1][Definition.]{\begin{trivlist}
\item[\hskip \labelsep {\bfseries #1}]}{\end{trivlist}}
\newenvironment{remark}[1][Remark:]{\begin{trivlist}
\item[\hskip \labelsep {\bfseries #1}]}{\end{trivlist}}
\title{The Intersection of Two Fermat Hypersurfaces in $\mathbb{P}^3$
via Computation of Quotient Curves}
\author{Vijaykumar Singh\thanks{Research supported by Claude Shannon Institute,
Science Foundation Ireland Grant 06/MI/006.}
\ and
Gary McGuire\thanks{Research supported by Claude Shannon Institute,
Science Foundation Ireland Grant 06/MI/006.}
 \\
School of Mathematical Sciences\\
University College Dublin\\
Ireland\\
Email: \texttt{vijaykumar.singh@ucdconnect.ie}, \texttt{gary.mcguire@ucd.ie}}
\begin{document}

\maketitle
Mathematics Subject Classification: 11G20, 14H45

\begin{abstract}
 We study the  intersection of two particular Fermat hypersurfaces  in $\mathbb{P}^3$
 over a finite field. 
 Using the Kani-Rosen decomposition we study arithmetic properties of this curve in terms of its quotients.  Explicit computation of the quotients is done using a Gr\"obner basis algorithm.  We also study the $p$-rank, zeta function, and number of rational points,
 of the modulo $p$ reduction of the curve.
 We show that the Jacobian of the genus 2 quotient is $(4,4)$-split.
\end{abstract}

\section{Introduction}

Let $\mathbb{F}_q$ denote the finite field with $q$ elements, where $q=p^n$ and $p$ is a prime.
The equations 
$x^a+y^a+z^a+w^a=0$, $x^b+y^b+z^b+w^b=0$
each define a Fermat hypersurface in $\mathbb{P}^3(\mathbb{F}_q)$, and their
intersection defines a curve.
This article studies some arithmetic properties of this curve in the 
particular case $a=2$, $b=3$, and $p\geq 5$.
We therefore study the curve defined by the  equations
\[
x^2+y^2+z^2+w^2=0, \ \ \ x^3+y^3+z^3+w^3=0,
\]
over $\mathbb{Q}$ (and $\overline{\mathbb{Q}}$), 
by studying some of its quotient curves, 
and consider the reduction modulo $p$.
It would be nice to have a formula for the number of rational points over $\mathbb{F}_p$,
but our investigations show that such a formula is unlikely to exist.

Previous work on these curves has mostly been done when $p=2$, because of connections to cyclic codes in algebraic coding theory.  The case $a=1$ and $b$ arbitrary (when $p=2$) was studied in 
\cite{JMW}.  The case $a=1$ is somewhat different to the case $a>1$ because when $a=1$ the
curve is a plane curve and its equation is easily obtained.
Cases $a=3, b=5$ and $a=1, b=11$ are studied in \cite{Gary},
where the zeta function was computed and applied.
The two above equations can be considered as a system of diagonal equations. 
Such systems have been studied before. Wolfmann  \cite{Wolfmann} and Knapp  \cite{Knapp} study a system with terms having the same exponents but different coefficients,
 over finite fields and local fields respectively. 

  In Section \ref{bg} we present some background needed for the paper.
  In Section \ref{alg1} we give some relevant invariant theory, which is needed to provide the Kani-Rosen decomposition,
  and our algorithm to compute the Gr\"obner basis for a quotient curve, which may be new.
  Section \ref{firstthings} calculates the genus, proves nonsingularity, and computes
  the quotients involved in the Kani-Rosen decomposition.
  Section \ref{g1quots} studies the two genus 1 quotients, and Section 
  \ref{g2quots} studies the genus 2 quotient.
  The genus 2 quotient exhibits some interesting behaviour, having a Jacobian which we prove 
  is a $(4,4)$-split.  These two sections work over $\overline{\mathbb{Q}}$.
 Section \ref{reductions} considers the good reductions of our curve modulo primes,
 and the L-polynomials.
Section \ref{pts} deals with the number of points on the curves over $\mathbb{F}_q$, 
and shows some data from computations.

\bigskip
\noindent\emph{Acknowledgements.}
We would like to thank Nils Bruin and Christophe Ritzenthaler 
for their helpful comments,
and Alexey Zaytsev 
for many helpful discussions.
\section{Background}\label{bg}

In this section we  present a summary of the background needed for this paper.

\subsection{Algebraic Geometry}
Let $k$ be a field, $\mathbb{A}^n(k)$ be affine $n$-space. For $S \subset k[x_1,\dots, x_n]$ define 
\begin{displaymath}
V(S)=\{x \in \mathbb{A}^n(k)|f(x)=0 \ \ \forall f \in S\}.
\end{displaymath}
If $S$ consists of a single (nonconstant) polynomial then $V(S)$ is called a hypersurface.
A subset $V$ of $\mathbb{A}^n(k)$ is called an affine algebraic set if $V=V(S)$ for some $S \subset k[x_1,\dots, x_n]$. 
Then all polynomials in $k[x_1,\dots, x_n]$ that vanish on $V$ forms an ideal $I(V)$ called ideal of V. The affine coordinate ring $k[V]$ of $V$ is quotient ring $k[x_1,\dots, x_n]/I(V)$. If $V$ is irreducible or equivalently $I(V)$ is a prime ideal then $V$ is called affine algebraic variety and the quotient of integral domain $k[x_1,\dots, x_n]/I(V)$  is called function field of $V$  denoted by $K(V)$. If we allow only the homogenous polynomials in $S$ we have a projective algebraic set and hence a projective variety. If for a variety $C$, the function field $k(C)$ has a transcendental degree $1$ over $k$, then $C$ is called an algebraic curve. 

The genus of a curve is defined using the Riemann-Roch theorem.
A non-singular curve of genus 1 is called an elliptic curve. A curve $X$ of genus $\geq 2 $ is called hyperelliptic if there is a map $f:X\rightarrow \mathbb{P}^1$ of degree $2$.

An algebraic group  is a variety $V$ together with a morphism $\oplus:V\times V \rightarrow V$  such that the set of points of $V$ with the operation given by $\oplus$ is a group. A projective algebraic group is called an abelian variety. 

Let $A$ and $B$ be two abelian varieties. The morphisms from $A$ to $B$, which is also a group homomorphism forms a $\mathbb{Z}$-module is denoted by $Hom(A,B)$. 
An element  $\phi \in Hom(A,B)$ is called an \textit{isogeny} if $ker(\phi)$ is finite and $Im(\phi)=B$.

\subsection{Curves}

Let $C$ be an algebraic curve defined over $k$. 
 Recall that the \textit{Jacobian variety}  $J_C$ is an abelian variety with the group structure corresponding the quotient group $Pic^0 (C)$, the degree 0 divisors modulo the
 principal divisors.

 The \textit{zeta function}  of a curve  $C/\mathbb{F}_{q}$  is given by
  \begin{displaymath}
Z(t,C):=exp\left \{ \sum^{\infty  }_{m=1} N_m \frac{t^m}{m}\right\} 
  \end{displaymath}
  where $N_m$ is the number $\mathbb{F}_q^m$-rational points.
  It was shown by Artin and Schmidt that $Z(t,C)$ can be written in the form
  \[
   \frac{L(t)}{(1-qT)(1-T)}
   \]
  where $L(t) \in \mathbb{Z}[t]$ (called the $L$-polynomial) is of degree $2g$, where $g$ is the genus of the curve $C$.  
    For any abelian variety $A=J_C$, the characteristic polynomial of the Frobenius endomorphism (acting on the $l$-adic Tate module of $A$), will be denoted $f_A(t)$.
It is independent of $l$ and has coefficients in $\mathbb{Z}$. In fact, Weil proved that
$f_A(t)$ is the reciprocal polynomial of the $L$-polynomial of $C$.  

We will use the following fundamental result.
 
 \begin{theorem}(Tate)\label{tate} If $A$ and $B$ are the abelian varieties defined over $\mathbb{F}_q$. Then $A$ is $\mathbb{F}_q$- isogenous to abelian subvariety of $B$ if and only if $f_A(t)$ divides  $f_B(t)$ in $\mathbb{Q}[t]$. In particular, $f_A(t)= f_B(t)$ if and only if $A$ and $B$ are $\mathbb{F}_q$-isogenous.
\end{theorem}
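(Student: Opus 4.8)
Since Theorem~\ref{tate} is the classical isogeny criterion of Tate, the honest ``proof plan'' is to quote it from Tate's paper and, for the record, indicate how its proof is organised; I sketch the latter. Fix a prime $\ell\neq p$, write $T_\ell A$ for the $\ell$-adic Tate module, $V_\ell A := T_\ell A\otimes_{\mathbb{Z}_\ell}\mathbb{Q}_\ell$, and let $\pi$ be the $q$-power Frobenius, which acts on $V_\ell A$ with characteristic polynomial $f_A(t)$. Two inputs carry the argument. The first is that $\pi$ acts \emph{semisimply} on $V_\ell A$: this comes from the positivity of the Rosati involution attached to a polarization of $A$ together with Weil's estimate $|\alpha|=\sqrt q$ on the Frobenius eigenvalues, which together rule out nontrivial Jordan blocks. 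The second, and the deep one, is Tate's theorem that the natural map
\[
\mathrm{Hom}(A,B)\otimes_{\mathbb{Z}}\mathbb{Q}_\ell \;\longrightarrow\; \mathrm{Hom}_{\mathbb{Q}_\ell[\pi]}(V_\ell A,\,V_\ell B)
\]
is an isomorphism. Given semisimplicity, $V_\ell A$ is determined as a $\mathbb{Q}_\ell[\pi]$-module by $f_A(t)$, and it embeds $\pi$-equivariantly into $V_\ell B$ if and only if $f_A(t)\mid f_B(t)$ in $\mathbb{Q}_\ell[t]$; divisibility of monic polynomials is unaffected by the extension $\mathbb{Q}\subseteq\mathbb{Q}_\ell$, so this is the same as $f_A(t)\mid f_B(t)$ in $\mathbb{Q}[t]$.

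The remaining work is to translate between modules and varieties, which is routine. If $f_A\mid f_B$, the displayed isomorphism yields a $\mathbb{Q}_\ell[\pi]$-embedding $V_\ell A\hookrightarrow V_\ell B$ induced by some element of $\mathrm{Hom}(A,B)\otimes\mathbb{Q}_\ell$; the $\phi$ in the $\mathbb{Q}$-form $\mathrm{Hom}(A,B)\otimes\mathbb{Q}$ with $V_\ell\phi$ non-injective form a proper Zariski-closed subset, so some $\phi\in\mathrm{Hom}(A,B)\otimes\mathbb{Q}$ has $V_\ell\phi$ injective, and clearing denominators gives a morphism $\phi\colon A\to B$ with $\ker\phi$ finite, i.e.\ an isogeny of $A$ onto the abelian subvariety $\phi(A)\subseteq B$. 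Conversely, if $A$ is $\mathbb{F}_q$-isogenous to an abelian subvariety $B'\subseteq B$, Poincaré complete reducibility gives $B\sim_{\mathbb{F}_q}B'\times B''$, hence $f_B=f_{B'}f_{B''}$, and since isogenous abelian varieties have equal Frobenius characteristic polynomials $f_A=f_{B'}$, which divides $f_B$. Finally the ``in particular'' clause is the case $\dim A=\dim B$: both $f_A,f_B$ are then monic of degree $2\dim B$, so $f_A\mid f_B$ forces $f_A=f_B$, and conversely $f_A=f_B$ makes each of $A,B$ isogenous to an abelian subvariety of the other of full dimension, hence isogenous.

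I expect the only real obstacle to be the surjectivity half of Tate's structural theorem, that $\mathrm{Hom}(A,B)\otimes\mathbb{Z}_\ell\to\mathrm{Hom}_{\mathbb{Z}_\ell[\pi]}(T_\ell A,T_\ell B)$ is onto: its proof passes through the finiteness statement that a given $\mathbb{F}_q$-isogeny class contains only finitely many isomorphism classes of abelian varieties, followed by a maximality/stabilizer argument promoting a $\pi$-stable $\mathbb{Z}_\ell$-lattice that realises a $\mathbb{Q}_\ell$-homomorphism to one coming from an actual morphism; the semisimplicity of $\pi$ likewise requires its own Weil-based input. For the present paper none of this is reproved --- Theorem~\ref{tate} is invoked verbatim from Tate's work --- and the content we actually supply downstream is only the elementary dictionary above between divisibility of $L$-polynomials and isogeny-containment of Jacobians.
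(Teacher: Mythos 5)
The paper gives no proof of this theorem --- it is stated as a known classical result and cited to Tate's 1966 paper --- and your proposal correctly recognises this, treating the statement as quoted rather than reproved. The sketch you supply of the standard argument (semisimplicity of Frobenius on $V_\ell A$, Tate's isomorphism $\mathrm{Hom}(A,B)\otimes\mathbb{Q}_\ell \cong \mathrm{Hom}_{\mathbb{Q}_\ell[\pi]}(V_\ell A, V_\ell B)$, and the elementary dictionary between divisibility of characteristic polynomials and isogeny-containment via Poincar\'e reducibility) is accurate and consistent with the cited reference.
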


Given an abelian variety A defined over $k$, the $p$-rank of $A$ is defined by
\begin{displaymath}
                         r_p (A) = dim_{\mathbb{F}_p} A(\overline{k})[p],
\end{displaymath}
where $A(\overline{k})[p]$ is the subgroup of $p$-torsion points over the algebraic closure. We have $0 \leq r_p (A) \leq dim(A)$. The number $r_p(A)$ is invariant under isogenies over $k$, and satisfies $r_p (A_{1} \times A_{2} ) = r_p (A_{1}) + r_p (A_{2} )$. An elliptic curve over $k$ is called ordinary if its $p$-rank is $1$ and is called supersingular if its $p$-rank is $0$. In fact,  an elliptic curve is supersingular if and only if $p|a_{1}$ where $f(t)= x^2+a_{1}x+q$  is its characteristic polynomial. An abelian variety A of dimension $g$ over $k$ is called ordinary if its $p$-rank is $g$ and supersingular if it is $\overline{k}$-isogenous to a power of a supersingular elliptic curve. A curve $C$ over $\mathbb{F}_{q}$ is called supersingular if the Jacobian $J_{C}(\mathbb{F}_{q})$ of $C$ is supersingular.

We also will compare our curves to the following famous bound.

\begin{theorem}(Hasse-Weil- Serre Bound) If $C$ is a curve of genus $g$ over $\mathbb{F}_q$, then
\begin{displaymath}  |  \# C(\mathbb{F}_q)-( q + 1)| \leq \lfloor 2 \sqrt{q} \rfloor
\end{displaymath}
\end{theorem}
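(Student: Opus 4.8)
The plan is to run the classical two-step argument: first the Riemann Hypothesis for curves (Weil) to obtain the Hasse--Weil estimate, then Serre's sharpening via an integrality trick. I would begin by quoting Weil's theorem for $C$: the $L$-polynomial factors as $L(t)=\prod_{i=1}^{2g}(1-\alpha_i t)$ with $|\alpha_i|=\sqrt q$ for all $i$, and the functional equation of $L$ forces the map $\alpha\mapsto q/\alpha$ to permute the $\alpha_i$, which (since $|\alpha_i|^2=q$) is exactly complex conjugation on the root set. Matching the two descriptions of $Z(t,C)$ --- the rational function $L(t)/((1-t)(1-qt))$ and the exponential series built from the point counts --- gives
\[
\#C(\mathbb{F}_q)=q+1-\sum_{i=1}^{2g}\alpha_i=q+1-\sum_{i=1}^{g}x_i,\qquad x_i:=\alpha_i+\overline{\alpha_i}\in[-2\sqrt q,\,2\sqrt q].
\]
This already gives $|\#C(\mathbb{F}_q)-(q+1)|\le 2g\sqrt q$; the remaining work is to replace $2\sqrt q$ by $m:=\lfloor 2\sqrt q\rfloor$ using the integrality we extract next. (The argument produces the bound $g\lfloor 2\sqrt q\rfloor$, which is the displayed inequality when $g=1$.)

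The key intermediate claim is that the real Weil polynomial $h(t):=\prod_{i=1}^{g}(t-x_i)$ is monic with integer coefficients. To see this, observe that the involution $\alpha\mapsto q/\alpha$ on $\{\alpha_i\}$ commutes with every $\sigma\in\mathrm{Gal}(\overline{\mathbb{Q}}/\mathbb{Q})$ (since $\sigma$ fixes $q$) and coincides with complex conjugation; hence $\sigma$ permutes the multiset $\{x_1,\dots,x_g\}$, so $h(t)\in\mathbb{Q}[t]$, and since each $x_i$ is an algebraic integer we conclude $h(t)\in\mathbb{Z}[t]$.

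I would then finish by Serre's averaging argument. From $m\le 2\sqrt q<m+1$ we get $m+1+x_i>0$ and $m+1-x_i>0$ for every $i$, and the products
\[
\prod_{i=1}^{g}(m+1+x_i)=\bigl|h(-(m+1))\bigr|,\qquad \prod_{i=1}^{g}(m+1-x_i)=\bigl|h(m+1)\bigr|
\]
are integers, nonzero because $|x_i|<m+1$, hence each is at least $1$. The AM--GM inequality then gives $\sum_{i=1}^{g}(m+1+x_i)\ge g\bigl(\prod_{i=1}^{g}(m+1+x_i)\bigr)^{1/g}\ge g$, so $\sum_i x_i\ge -gm$, and symmetrically $\sum_i x_i\le gm$. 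Combining, $|\#C(\mathbb{F}_q)-(q+1)|=\bigl|\sum_{i=1}^{g}x_i\bigr|\le g\lfloor 2\sqrt q\rfloor$.

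The one genuinely delicate point should be the integrality of $h(t)$; everything after it is an elementary estimate, and everything before it rests on Weil's theorem, which we are free to quote. The only additional thing to verify is that $\pm(m+1)$ are not roots of $h$, which is immediate from the strict inequality $2\sqrt q<m+1$, so it poses no obstacle even when $4q$ is a perfect square.
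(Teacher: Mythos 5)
Your argument is correct, and it is the standard Weil--Serre proof: Weil's Riemann Hypothesis for curves plus the functional equation to pair the Frobenius eigenvalues into conjugate pairs, integrality of the real Weil polynomial $h(t)=\prod_i(t-x_i)$ via Galois-equivariance of the pairing $\alpha\mapsto q/\alpha$, and then Serre's AM--GM trick applied to the positive integers $|h(\pm(m+1))|$. The paper gives no proof at all --- the theorem is quoted as a background result --- so there is nothing to compare against; your write-up supplies the missing argument and all the delicate points (integrality of $h$, nonvanishing of $h$ at $\pm(m+1)$ even when $q$ is a square) are handled correctly. One substantive remark: the statement as printed in the paper omits the genus factor, and the correct Serre bound is $|\#C(\mathbb{F}_q)-(q+1)|\le g\lfloor 2\sqrt q\rfloor$; this is what you actually prove, and it is also what the paper uses implicitly in its later tables (e.g.\ the genus $4$ bounds in Section \ref{pts} are $q+1\pm 4\lfloor 2\sqrt q\rfloor$), so your version is the right one and the displayed inequality should be read with the factor $g$ restored.
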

The curve $C$ is said to have defect $\Delta$ if 
\begin{displaymath}    \# C(\mathbb{F}_q)= q + 1 + \lfloor 2 \sqrt{q} \rfloor-\Delta.
\end{displaymath}

\subsection{Kani-Rosen Decomposition}

  If $G \leq Aut(C)$ is a automorphism group of a curve $C$, then any $H \leq G$ defines an idempotent $\epsilon_H \in \mathbb{Q}[G]$  by \\
   \begin{displaymath}
   \epsilon_H := \frac{1}{\left|H\right|}\sum_{h \in H}h.
  \end{displaymath}
  \begin{definition} For  $\epsilon_1 ,  \epsilon_2  \in End_0(J_C):=  End(J_C) \otimes_\mathbb{Z} \overline{\mathbb{Q}} $, we say $\epsilon_1 \sim  \epsilon_2$ if for all $\overline{\mathbb{Q}}$-characters $\chi$ of $End_0(J_C)$. we have $\chi(\epsilon_1 )= \chi(\epsilon_2)$.
\end{definition}
  Also $\epsilon_H(J_C) \sim J_{C/H}$.\\

   \begin{theorem} \emph{(Kani-Rosen)}\label{Kani} Given a curve $C$, let $G \leq Aut(C)$ be a finite
group. Given   $\epsilon_1,\ldots \epsilon_n,\epsilon^{'}_1,\ldots \epsilon^{'}_m \in End_{0}(J_C) $  the idempotent relation
\begin{displaymath}
 \epsilon_1+\ldots +\epsilon_n  \sim \epsilon^{'}_1 +\ldots \epsilon^{'}_m
\end{displaymath}
holds in $ End_{0}(J_C) $ if and only  if we have following isogeny relation
\begin{displaymath}
 \epsilon_1(J_C)+\ldots +\epsilon_n(J_C)  \sim \epsilon^{'}_1(J_C) +\ldots \epsilon^{'}_m (J_C).
\end{displaymath}
  \end{theorem}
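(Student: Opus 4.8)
Since this is the idempotent relation theorem of Kani and Rosen one may simply invoke their paper; for completeness, here is how I would reconstruct the argument. The plan is to run everything inside the rational endomorphism algebra and to reduce \emph{both} halves of the equivalence to one combinatorial statement about the multiplicities of the simple isogeny factors of $J_C$. The idempotents occurring here have rational coefficients (being $\mathbb{Q}$-linear combinations of the $\epsilon_H$), so it suffices to work in $End(J_C)\otimes\mathbb{Q}$ and to extend scalars to $\overline{\mathbb{Q}}$ only in order to read off characters.

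First I would recall the structural input. By Poincar\'e complete reducibility $J_C\sim B_1^{m_1}\times\cdots\times B_r^{m_r}$ with the $B_i$ pairwise non-isogenous simple abelian varieties, and this decomposition is unique up to isogeny and reordering; in particular two abelian varieties assembled from the $B_i$ are isogenous if and only if every $B_i$ occurs with the same multiplicity in both. Correspondingly $End(J_C)\otimes\mathbb{Q}\cong\prod_{i=1}^{r}M_{m_i}(D_i)$ with $D_i=End(B_i)\otimes\mathbb{Q}$ a division algebra of center $K_i$ and $[D_i:K_i]=d_i^{2}$, and after extension of scalars $End_0(J_C)=End(J_C)\otimes\overline{\mathbb{Q}}\cong\prod_{i}\prod_{\sigma\colon K_i\hookrightarrow\overline{\mathbb{Q}}}M_{m_i d_i}(\overline{\mathbb{Q}})$. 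This last algebra is split semisimple, so to test the relation $\sim$ it is enough to evaluate the irreducible characters $\chi_{i,\sigma}=\mathrm{tr}\circ\pi_{i,\sigma}$; and on an element $x$ of the $i$-th block one has $\chi_{i,\sigma}(x)=\sigma\bigl(\mathrm{trd}_i(x)\bigr)$, where $\mathrm{trd}_i$ denotes the reduced trace of $M_{m_i}(D_i)$ over $K_i$.

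Next I would set up the dictionary between idempotents and subvarieties. For a rational idempotent $\epsilon$ pick $N\in\mathbb{Z}$ with $N\epsilon\in End(J_C)$ and set $\epsilon(J_C):=(N\epsilon)(J_C)$; this is an abelian subvariety well defined up to isogeny, independent of $N$ and of the representative of $\epsilon$. Writing $\epsilon(J_C)\sim B_1^{a_1}\times\cdots\times B_r^{a_r}$, the integer $a_i$ is exactly the reduced rank of the image of $\epsilon$ in $M_{m_i}(D_i)$; since such an idempotent is conjugate to a diagonal idempotent with $a_i$ ones, and the reduced trace is conjugation invariant, $\mathrm{trd}_i(\epsilon)=a_i d_i\in\mathbb{Q}\subseteq K_i$, so $\chi_{i,\sigma}(\epsilon)=a_i d_i$ for every $\sigma$. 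Now put $T=\epsilon_1+\cdots+\epsilon_n$ and $T'=\epsilon_1'+\cdots+\epsilon_m'$ in $End_0(J_C)$. By linearity of $\mathrm{trd}_i$, for each $i$ the quantity $\mathrm{trd}_i(T)$ equals $d_i$ times the multiplicity of $B_i$ in $\prod_{j}\epsilon_j(J_C)$, and likewise for $T'$. Hence $\chi(T)=\chi(T')$ for all characters $\chi$ if and only if $\mathrm{trd}_i(T)=\mathrm{trd}_i(T')$ for every $i$, if and only if (cancelling the nonzero $d_i$) every $B_i$ occurs with the same multiplicity in $\prod_{j}\epsilon_j(J_C)$ as in $\prod_{k}\epsilon_k'(J_C)$, if and only if $\prod_{j}\epsilon_j(J_C)\sim\prod_{k}\epsilon_k'(J_C)$; this last statement is exactly the asserted isogeny relation (reading the formal sums as products, or more generally as an identity in the free abelian group on isogeny classes of abelian varieties). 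This establishes both implications simultaneously.

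The step I expect to be the main obstacle is the passage just used from an idempotent to the isogeny type of its image: that $\epsilon(J_C)$ is well defined up to isogeny for a merely rational idempotent, and that its simple-factor multiplicities are read off by the reduced ranks of the corresponding matrix block. This rests on the Artin--Wedderburn description of the semisimple algebra $End(J_C)\otimes\mathbb{Q}$ together with the correspondence between its left ideals and the abelian subvarieties of $J_C$ up to isogeny, along with the bookkeeping for reduced traces when some center $K_i$ is strictly larger than $\mathbb{Q}$; by comparison the remaining ingredients --- Poincar\'e complete reducibility, conjugation invariance and linearity of the reduced trace, and uniqueness of the simple decomposition --- are standard.
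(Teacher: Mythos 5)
The paper does not prove this statement at all: it is quoted as background and attributed to Kani--Rosen, with the proof living in the cited reference \cite{Kani}. So there is nothing internal to compare against; what matters is whether your reconstruction is sound, and it is. Your argument is essentially the original one: Poincar\'e complete reducibility gives $J_C\sim B_1^{m_1}\times\cdots\times B_r^{m_r}$, the Wedderburn decomposition identifies $End(J_C)\otimes\mathbb{Q}$ with $\prod_i M_{m_i}(D_i)$, the $\overline{\mathbb{Q}}$-characters are the reduced traces of the simple blocks composed with embeddings of the centers, and on an idempotent the reduced trace of the $i$-th block equals $d_i$ times the multiplicity of $B_i$ in the image subvariety; summing and comparing multiplicities converts the character identity into the isogeny relation and back. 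The bookkeeping with the factors $d_i$ and the injectivity of the embeddings $\sigma$ is handled correctly, and you rightly identify the one nontrivial input (the dictionary between idempotents of the endomorphism algebra and abelian subvarieties up to isogeny, with rank read off as multiplicity) as the load-bearing step. One small caveat: the paper states the theorem for arbitrary idempotents of $End_0(J_C)=End(J_C)\otimes\overline{\mathbb{Q}}$, and for a genuinely non-rational idempotent the symbol $\epsilon(J_C)$ is not defined by your recipe of clearing denominators; your restriction to rational idempotents matches Kani--Rosen's actual formulation and covers every use made of the theorem in this paper (all idempotents here are $\mathbb{Q}$-combinations of the $\epsilon_H$), but it is worth flagging that you are proving the rational version rather than the literal statement as transcribed.
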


\section{Invariant theory and Quotient Curves}\label{alg1}

Here we present our algorithm for computing quotient curves of our given curve, using
Gr\"obner bases.

 Let $K[X]:=K[x_{1},\ldots x_{n}]$ be a polynomial ring in n variables over the field $K$. Let $\Gamma$ be a sub-group of the group $GL_{n}(K)$ of invertible $n\times n$-matrices with entries from $K$. Then\\
\begin{displaymath}
 K[x_{1},\ldots x_{n}]^{\Gamma}:=\{f\in  K[x_{1},\ldots x_{n}]| \ f=f\circ\pi
 \ \textrm{ for all }  \pi \in \Gamma \}.
\end{displaymath} 
is a subring of $K[X]$.
The Reynolds operator  $R_{\Gamma}$ of the group $\Gamma$ is defined as\\ 
\begin{displaymath}
R_{\Gamma}:K[X]\rightarrow K[X]^{\Gamma},~~~~~~~~~~~ R_{\Gamma}(f):=\frac{1}{|\Gamma|}\sum_{\pi \in \Gamma}f \circ\pi 
\end{displaymath}

\begin{theorem}({\emph Hilbert's finiteness theorem}) The invariant ring $(K[X])^\Gamma$ of a finite matrix group $\Gamma \in GL_{n}(K)$ is finitely generated. 
\end{theorem}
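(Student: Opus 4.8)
The plan is to give the classical proof via the \emph{Hilbert ideal} together with the averaging property of the Reynolds operator $R_{\Gamma}$. Throughout one assumes $\mathrm{char}\,K \nmid |\Gamma|$, which is automatic in our applications since $p \geq 5$ and the groups occurring are subgroups of $S_4$; this is exactly what is needed for $R_{\Gamma}$ as defined above to make sense and to be a projection of $K[X]$ onto $K[X]^{\Gamma}$.

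First I would record two elementary properties of $R_{\Gamma}$. Since each $\pi \in \Gamma$ acts by a linear substitution of the variables, $f \circ \pi$ is homogeneous of the same degree as $f$; hence $R_{\Gamma}$ preserves the grading of $K[X]$ and in particular sends homogeneous invariants of degree $d$ to homogeneous invariants of degree $d$. A consequence is that $K[X]^{\Gamma}$ is a graded subring: if $f = \sum_{d} f_{d}$ is invariant, then $f = R_{\Gamma}(f) = \sum_{d} R_{\Gamma}(f_{d})$ with $R_{\Gamma}(f_{d})$ homogeneous of degree $d$, so $f_{d} = R_{\Gamma}(f_{d})$ is invariant. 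Second, if $h \in K[X]^{\Gamma}$ then $(hf)\circ \pi = h\,(f\circ \pi)$, so $R_{\Gamma}(hf) = h\,R_{\Gamma}(f)$; that is, $R_{\Gamma}$ is $K[X]^{\Gamma}$-linear.

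Next I would introduce the Hilbert ideal $I \subseteq K[X]$, the ideal generated by all homogeneous invariants of strictly positive degree. By Hilbert's basis theorem $K[X]$ is Noetherian, so $I$ is finitely generated; since $I$ is generated by homogeneous invariants, one may choose a finite generating set $f_{1},\ldots,f_{m}$ consisting of homogeneous elements of $K[X]^{\Gamma}$ with $\deg f_{i} > 0$. The claim is that $K[X]^{\Gamma} = K[f_{1},\ldots,f_{m}]$, the inclusion $\supseteq$ being clear. For $\subseteq$, since $K[X]^{\Gamma}$ is spanned over $K$ by its homogeneous pieces it suffices to show every homogeneous invariant $f$ lies in $K[f_{1},\ldots,f_{m}]$, which I would prove by induction on $d = \deg f$. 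The case $d = 0$ is trivial. For $d > 0$ we have $f \in I$, so $f = \sum_{i=1}^{m} g_{i} f_{i}$ with $g_{i} \in K[X]$; comparing homogeneous components we may take each $g_{i}$ homogeneous of degree $d - \deg f_{i} < d$. Applying $R_{\Gamma}$, using $R_{\Gamma}(f) = f$ and $K[X]^{\Gamma}$-linearity, gives $f = \sum_{i} R_{\Gamma}(g_{i})\, f_{i}$, where each $R_{\Gamma}(g_{i})$ is a homogeneous invariant of degree $d - \deg f_{i} < d$, hence lies in $K[f_{1},\ldots,f_{m}]$ by the induction hypothesis; therefore $f \in K[f_{1},\ldots,f_{m}]$.

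The only external input is Hilbert's basis theorem, and there is no serious obstacle; the one point to handle with care is the reduction to homogeneous $g_{i}$ and the observation that $\deg f_{i} > 0$ forces $R_{\Gamma}(g_{i})$ into strictly lower degree, which is precisely what makes the induction close. (In the modular case $\mathrm{char}\,K \mid |\Gamma|$ the Reynolds operator is unavailable and one must instead invoke Noether's more delicate argument, but that situation does not arise here.)
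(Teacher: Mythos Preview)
Your argument is the standard Hilbert-ideal proof and is correct; the only caveat you already flag is that it needs $\mathrm{char}\,K \nmid |\Gamma|$ for the Reynolds operator to be defined, which is fine for the paper's applications. Note, however, that the paper itself states this theorem as background and gives no proof, so there is nothing to compare against; your write-up simply supplies a proof where the paper cites the result.
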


\begin{theorem}(\emph{Noether's degree bound}) The invariant ring $K[x]^{\Gamma}$ of finite matrix group $\Gamma$ has an algebra basis consisting of atmost
$n+|\Gamma| \choose n$  invariants whose degree is bounded above by the group order $|\Gamma|$.
\end{theorem}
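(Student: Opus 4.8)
The plan is to prove the sharper statement that $K[x_1,\dots,x_n]^{\Gamma}$ is generated as a $K$-algebra by the finite set $\{R_\Gamma(x^\alpha):\alpha\in\mathbb{N}^n,\ |\alpha|\le|\Gamma|\}$; the numerical bound then follows immediately, since the number of monomials $x^\alpha$ of degree at most $|\Gamma|$ in $n$ variables is exactly $\binom{n+|\Gamma|}{n}$. The first observation is that every invariant $f$ is a $K$-linear combination of Reynolds images of monomials: writing $f=\sum_\beta c_\beta x^\beta$ and using that $R_\Gamma$ is $K$-linear and restricts to the identity on the invariant ring, we get $f=R_\Gamma(f)=\sum_\beta c_\beta R_\Gamma(x^\beta)$. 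Hence, arguing by induction on degree, it is enough to show that $R_\Gamma(x^\beta)$ with $|\beta|>|\Gamma|$ lies in the subalgebra generated by invariants of strictly smaller degree, for then the inductive hypothesis pushes everything down to degree at most $|\Gamma|$.

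For the inductive step I would run Noether's classical orbit argument. Put $g=|\Gamma|$, adjoin new indeterminates $u_1,\dots,u_n$, and set $\ell=u_1x_1+\cdots+u_nx_n$, with $\Gamma$ acting on the $x$'s only; then $\Gamma$ permutes the $g$-element family $\{\sigma(\ell):\sigma\in\Gamma\}$, so its elementary symmetric functions $E_1,\dots,E_g$ and its power sums $S_k=\sum_{\sigma\in\Gamma}\sigma(\ell)^k$ are $\Gamma$-invariant in $x$, with $E_i$ bihomogeneous of degree $i$ in $x$ and in $u$, and $S_k$ bihomogeneous of degree $k$ in each. Newton's identities express $S_k$ for $k>g$ as a polynomial with integer coefficients in $E_1,\dots,E_g$, involving only products $E_{i_1}\cdots E_{i_r}$ with $i_1+\cdots+i_r=k$. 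Reading off the coefficient of a monomial $u^\beta$ with $|\beta|=k$ from both sides: on the right it is a $K$-polynomial in the $u$-coefficients of $E_1,\dots,E_g$, each of which is a $\Gamma$-invariant of degree at most $g$ and hence a $K$-linear combination of our proposed generators; on the left, since $S_k=g\,R_\Gamma(\ell^k)$ and $R_\Gamma(\ell^k)=\sum_{|\beta|=k}\binom{k}{\beta}u^\beta R_\Gamma(x^\beta)$, it equals $g\binom{k}{\beta}R_\Gamma(x^\beta)$. Dividing by the scalar $g\binom{k}{\beta}$ then realizes $R_\Gamma(x^\beta)$ inside the subalgebra generated by invariants of degree at most $g$, closing the induction.

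The step I expect to be the main obstacle---or rather the main caveat---is the characteristic hypothesis implicit above. The Reynolds operator requires $|\Gamma|$ to be invertible in $K$, and the final division additionally needs the multinomial coefficients $\binom{k}{\beta}$ with $k>g$ to be nonzero; both hold in characteristic $0$, which is the only setting in which we actually apply the theorem (all the quotient-curve computations are done over $\overline{\mathbb{Q}}$), but the second can fail for small $p$. So over $\mathbb{Q}$ the argument is complete as stated; for reductions modulo $p$ one restricts to $p>|\Gamma|$, or instead invokes the modular Noether bound of Fleischmann and Fogarty, which obtains the same bound $|\Gamma|$ whenever $p\nmid|\Gamma|$ by a more careful argument avoiding the multinomial coefficients altogether. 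I would also remark that the count $\binom{n+|\Gamma|}{n}$ is only a crude corollary of the degree bound and is typically far from optimal; for the small automorphism groups relevant to this paper it is easily improved upon by the Gr\"obner-basis computation described below.
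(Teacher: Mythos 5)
The paper does not prove this statement at all: it is quoted as standard background from the invariant-theory literature (cf.\ the cited book of Derksen--Kemper), so there is no in-paper argument to compare against. Your reconstruction is the classical Noether argument and it is correct: the reduction to Reynolds images of monomials, the orbit polynomial $\ell=u_1x_1+\cdots+u_nx_n$, the bihomogeneous elementary symmetric functions $E_1,\dots,E_g$, the Newton recursion $S_k=E_1S_{k-1}-\cdots+(-1)^{g-1}E_gS_{k-g}$ for $k>g$ (valid with integer coefficients since $E_j=0$ for $j>g$), and the extraction of the coefficient of $u^\beta$ all go through, and the count $\binom{n+|\Gamma|}{n}$ is exactly the number of monomials of degree at most $|\Gamma|$. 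Your caveat is also the right one: the final division by $g\binom{k}{\beta}$ confines the argument to characteristic $0$ or $p>|\Gamma|$, with Fleischmann--Fogarty covering the general non-modular case; since the paper only invokes the bound over $\overline{\mathbb{Q}}$, this is harmless here. The only cosmetic remark is that your induction on degree is not really needed --- the coefficient extraction places $R_\Gamma(x^\beta)$ directly in the subalgebra generated by invariants of degree at most $|\Gamma|$ in one step.
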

To help find the exact number of number of invariants we have following results\\
\begin{theorem}(Molien) The Hilbert series of the invariant ring of  $K[x]^{\Gamma}$ equals
\begin{displaymath}
\Phi_{\Gamma}(z)=\frac{1}{| \Gamma|} \sum_{\pi \in \Gamma}\frac{1}{det(id-z \pi)}.
\end{displaymath}
\end{theorem}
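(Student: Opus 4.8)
The plan is to extract the coefficient of $z^d$ in $\Phi_\Gamma(z)$ — that is, $\dim_K K[X]^\Gamma_d$, where $K[X]_d$ denotes the space of degree-$d$ homogeneous polynomials — by a trace computation, and then to sum the resulting series over $\Gamma$. Assume, as holds throughout this paper, that $|\Gamma|$ is invertible in $K$. The first step is the observation that the restriction of the Reynolds operator $R_\Gamma$ to the finite-dimensional space $K[X]_d$ is a $K$-linear projection with image $K[X]^\Gamma_d$: it fixes every invariant, and for arbitrary $f$ and $\sigma\in\Gamma$ one has $(R_\Gamma f)\circ\sigma = R_\Gamma f$ because left-composition by $\sigma$ merely permutes the summands. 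Since the trace of an idempotent operator equals the dimension of its image,
\[
\dim_K K[X]^\Gamma_d \;=\; \operatorname{tr}\bigl(R_\Gamma|_{K[X]_d}\bigr) \;=\; \frac{1}{|\Gamma|}\sum_{\pi\in\Gamma}\operatorname{tr}\bigl(\pi|_{K[X]_d}\bigr).
\]

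The second step is to evaluate $\operatorname{tr}(\pi|_{K[X]_d})$ for a single $\pi\in\Gamma$. Extending scalars to $\overline{K}$ changes neither traces nor the polynomial $\det(\mathrm{id}-z\pi)$, so we may work there; since $\pi^{|\Gamma|}=\mathrm{id}$ and $|\Gamma|$ is prime to the characteristic, $\pi$ is diagonalizable, with eigenvalues $\lambda_1,\dots,\lambda_n$ on linear forms (equivalently, the eigenvalues of $\pi^{\mathsf T}$, which is how $\pi$ acts on $K[X]_1$ under $f\mapsto f\circ\pi$). If $\ell_1,\dots,\ell_n$ is a corresponding eigenbasis of linear forms, then the degree-$d$ monomials in the $\ell_i$ form an eigenbasis of $K[X]_d$, the monomial $\ell_1^{a_1}\cdots\ell_n^{a_n}$ having eigenvalue $\lambda_1^{a_1}\cdots\lambda_n^{a_n}$. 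Therefore $\operatorname{tr}(\pi|_{K[X]_d})$ equals the complete homogeneous symmetric polynomial $h_d(\lambda_1,\dots,\lambda_n)=\sum_{a_1+\cdots+a_n=d}\lambda_1^{a_1}\cdots\lambda_n^{a_n}$.

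The last step is bookkeeping with generating functions. Using the classical identity $\sum_{d\ge 0} h_d(\lambda_1,\dots,\lambda_n)\,z^d = \prod_{i=1}^n (1-\lambda_i z)^{-1}$ together with $\det(\mathrm{id}-z\pi)=\det(\mathrm{id}-z\pi^{\mathsf T})=\prod_{i=1}^n(1-\lambda_i z)$, we obtain $\sum_{d\ge 0}\operatorname{tr}(\pi|_{K[X]_d})\,z^d = 1/\det(\mathrm{id}-z\pi)$ for each $\pi$. Interchanging the finite sum over $\pi\in\Gamma$ with the sum over $d$ then yields
\[
\Phi_\Gamma(z) \;=\; \frac{1}{|\Gamma|}\sum_{\pi\in\Gamma}\sum_{d\ge 0}\operatorname{tr}\bigl(\pi|_{K[X]_d}\bigr)\,z^d \;=\; \frac{1}{|\Gamma|}\sum_{\pi\in\Gamma}\frac{1}{\det(\mathrm{id}-z\pi)},
\]
which is the assertion.

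I expect the main obstacle to be the first step: one must be careful that $R_\Gamma$ genuinely is a well-defined projection, so that the identity ``trace $=$ dimension of image'' applies — this is exactly where the hypothesis that $|\Gamma|$ be a unit in $K$ (in particular $\operatorname{char}K\nmid|\Gamma|$, automatic in characteristic $0$, which is the case used in Section \ref{alg1}) is needed, and the same hypothesis is what guarantees diagonalizability of each $\pi$. A lesser point is the convention for the action of $\Gamma$ on $K[X]$; with $f\mapsto f\circ\pi$ the transpose $\pi^{\mathsf T}$ appears, but this is harmless since $\det$ is transpose-invariant, whereas with the ``inverse'' convention one would instead invoke the bijection $\pi\mapsto\pi^{-1}$ of $\Gamma$. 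Everything after the first step is routine linear algebra and symmetric-function manipulation.
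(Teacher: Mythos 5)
The paper does not prove this statement at all: Molien's theorem is quoted in Section \ref{alg1} as a classical background fact (alongside Hilbert's finiteness theorem and Noether's degree bound), with no argument supplied. Your proof is the standard textbook one and it is correct: the Reynolds operator restricted to $K[X]_d$ is an idempotent with image $K[X]^\Gamma_d$, its trace is the average of the traces of the $\pi|_{K[X]_d}$, each such trace is $h_d(\lambda_1,\dots,\lambda_n)$ by diagonalizability, and the generating function $\sum_d h_d z^d=\prod_i(1-\lambda_i z)^{-1}=1/\det(\mathrm{id}-z\pi)$ finishes it; the transpose issue is indeed harmless because $\det$ is transpose-invariant. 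One caveat worth making explicit: the identity ``trace of an idempotent equals the dimension of its image'' holds as an equation of integers only in characteristic $0$; in characteristic $p$ the trace computed in $K$ recovers the rank only modulo $p$, so the formula as you derive it gives the Hilbert series coefficients only mod $p$ unless one passes to Brauer lifts of the eigenvalues (which are roots of unity of order prime to $p$). Since the paper applies the theorem over $\mathbb{Q}$ and $\overline{\mathbb{Q}}$, your argument is airtight in the setting where it is actually used, but the parenthetical suggestion that the non-modular hypothesis $\operatorname{char}K\nmid|\Gamma|$ alone suffices for the statement as written is slightly too optimistic.
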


The following result may or may not be new; we have not found a reference.

\begin{theorem}Let  $char(K)\nmid |\Gamma|$ and $I\subset K[X]$ be ideal fixed by $\Gamma$.  Then\\
\begin{displaymath}
\frac{K[X]^{\Gamma}}{ I\cap K[X]^{\Gamma}} \cong (\frac{K[X]}{I})^{\Gamma}.
\end{displaymath}
\end{theorem}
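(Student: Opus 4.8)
The plan is to realize both sides as quotients of $K[X]$ and exploit the Reynolds operator $R_{\Gamma}$, whose availability is exactly what the hypothesis $\mathrm{char}(K)\nmid|\Gamma|$ provides.

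First I would set up the natural map. Let $q\colon K[X]\to K[X]/I$ be the canonical surjection. Because $I$ is stable under the substitution action of $\Gamma$, this action descends to $K[X]/I$ via $\bar f\cdot\pi:=\overline{f\circ\pi}$, and $q$ is then $\Gamma$-equivariant. Hence $q$ carries $K[X]^{\Gamma}$ into $(K[X]/I)^{\Gamma}$, giving a ring homomorphism $\varphi\colon K[X]^{\Gamma}\to(K[X]/I)^{\Gamma}$. Its kernel is $\{f\in K[X]^{\Gamma}:f\in I\}=I\cap K[X]^{\Gamma}$, so the first isomorphism theorem yields an injective homomorphism
\[
\bar\varphi\colon\ \frac{K[X]^{\Gamma}}{I\cap K[X]^{\Gamma}}\ \hookrightarrow\ \Bigl(\frac{K[X]}{I}\Bigr)^{\!\Gamma}.
\]

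Next I would prove surjectivity of $\bar\varphi$, which is the only substantive step. Take $\bar g\in(K[X]/I)^{\Gamma}$ and lift it to some $g\in K[X]$. Since $\mathrm{char}(K)\nmid|\Gamma|$, the element $R_{\Gamma}(g)=\tfrac{1}{|\Gamma|}\sum_{\pi\in\Gamma}g\circ\pi$ is defined; a standard reindexing $\pi\mapsto\pi\tau$ shows $R_{\Gamma}(g)\circ\tau=R_{\Gamma}(g)$ for all $\tau\in\Gamma$, so $R_{\Gamma}(g)\in K[X]^{\Gamma}$. Reducing modulo $I$ and using that the action descends together with the invariance $\bar g\cdot\pi=\bar g$, we obtain
\[
\varphi\bigl(R_{\Gamma}(g)\bigr)=\overline{R_{\Gamma}(g)}=\frac{1}{|\Gamma|}\sum_{\pi\in\Gamma}\overline{g\circ\pi}=\frac{1}{|\Gamma|}\sum_{\pi\in\Gamma}\bar g\cdot\pi=\bar g .
\]
Thus $\bar g$ lies in the image of $\varphi$, hence of $\bar\varphi$, and $\bar\varphi$ is an isomorphism.

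The remaining points are routine: one checks that the $\Gamma$-action on $K[X]/I$ is well defined (this uses only $\Gamma$-stability of $I$), that $\varphi$ is a ring homomorphism, and that $I\cap K[X]^{\Gamma}$ is an ideal of $K[X]^{\Gamma}$. I do not anticipate any real obstacle: the statement is a formal consequence of the averaging furnished by $R_{\Gamma}$, and the hypothesis $\mathrm{char}(K)\nmid|\Gamma|$ is used solely to legitimize the division by $|\Gamma|$ — without it the Reynolds operator need not exist and the isomorphism can fail.
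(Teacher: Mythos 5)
Your proposal is correct and follows essentially the same route as the paper: define the natural map $K[X]^{\Gamma}\to (K[X]/I)^{\Gamma}$, identify its kernel as $I\cap K[X]^{\Gamma}$, and establish surjectivity by averaging a lift with the Reynolds operator, which is where the hypothesis $\mathrm{char}(K)\nmid|\Gamma|$ is used. Your write-up is in fact a bit more careful than the paper's (checking that the action descends and that the map is a ring homomorphism), but the argument is the same.
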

\begin{proof}
Define a map
\begin{displaymath} \Pi: K[X]^{\Gamma}\rightarrow (\frac{K[X]}{I})^{\Gamma},~~~~~~~ 
                    f\rightarrow f+I.
\end{displaymath}
We claim that this map is surjective.   
For all $\pi \in \Gamma$ we have $(f+I)^{\pi}:=f^{\pi}+I$.
Let $f+I\in (\frac{K[X]}{I})^{\Gamma}$. This implies $f^{\pi}+I= f+I$ for all $\pi \in \Gamma$.
As $I$ is fixed by $\Gamma$ and $\Gamma$ is finite, summing over all the elements of $\Gamma$ we get 
$|\Gamma|f+I= \sum_{\pi \in \Gamma}f^{\pi}+I$.
Since $|\Gamma|\nmid char K$, we have $f+I=R_{\Gamma}(f)+I$.
As $R_{\Gamma}(f)\in K[X]^{\Gamma}$ the map defined above is surjective.
For kernel, $\Pi(f)=I$ which is possible if and only $f \in I\cap K[X]^{\Gamma}$, which proves the assertion.
\end{proof}
\begin{theorem} Let $K(V):=Quot(K[V])$ be a rational function field on vector space $V$. Then $K(V)^{\Gamma} =Quot(K[V])^{\Gamma}$.
\end{theorem}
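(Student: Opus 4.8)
As stated the identity is immediate from the definition $K(V)=Quot(K[V])$; the content surely intended is $K(V)^{\Gamma}=Quot\bigl(K[V]^{\Gamma}\bigr)$, i.e. that every $\Gamma$-invariant rational function is a quotient of two $\Gamma$-invariant polynomials, and this is what I would prove. The plan is to treat the two inclusions separately. One direction is formal: $K[V]^{\Gamma}\subseteq K(V)^{\Gamma}$, and $K(V)^{\Gamma}$ is a subfield of $K(V)$, hence closed under the field operations, so $Quot\bigl(K[V]^{\Gamma}\bigr)\subseteq K(V)^{\Gamma}$. Before doing anything else I would record the routine fact that the linear action of $\Gamma$ on $K[V]$ extends uniquely to an action by field automorphisms of $Quot(K[V])$ via $(f/g)^{\pi}=f^{\pi}/g^{\pi}$; this is what makes the superscript $\Gamma$ on $Quot(K[V])$ well defined and is needed implicitly throughout.

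For the reverse inclusion the one genuine idea is the \emph{invariant denominator} trick. Given $h\in K(V)^{\Gamma}$, write $h=f/g$ with $f,g\in K[V]$ and $g\neq 0$, and form the orbit product $N:=\prod_{\pi\in\Gamma}g^{\pi}$. Since $\Gamma$ merely permutes the factors of this product, $N\in K[V]^{\Gamma}$, and $N\neq 0$ because $K[V]$ is an integral domain and each factor $g^{\pi}$ is nonzero. Setting $\widetilde g:=\prod_{\pi\neq\mathrm{id}}g^{\pi}$ we have $N=g\,\widetilde g$, so $h=(f\widetilde g)/N$; put $f':=f\widetilde g\in K[V]$. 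Then $f'=hN$ as elements of $K(V)$, and since $h$ and $N$ are both $\Gamma$-invariant, so is $f'$, i.e. $f'\in K[V]^{\Gamma}$. Hence $h=f'/N\in Quot\bigl(K[V]^{\Gamma}\bigr)$, which closes the argument.

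Two remarks on the scope of the argument, neither of which I expect to be a real obstacle. First, the proof is \emph{characteristic-free}: unlike the Reynolds-operator proof of the previous theorem it never divides by $|\Gamma|$, so the hypothesis $char(K)\nmid|\Gamma|$ is unnecessary here, and only the finiteness of $\Gamma$ is used. Second, the same orbit-product argument applies verbatim if $\Gamma$ acts on the coordinate ring $K[C]$ of a $\Gamma$-stable subvariety $C$ rather than on $K[V]$ itself, provided one knows $K[C]$ is an integral domain — which is exactly the setting in which we will want to pass from a curve to its quotient. The only substantive hypotheses the proof consumes are that the ambient ring is a domain and that $\Gamma$ acts on it by ring automorphisms; everything else is bookkeeping, and the single step worth isolating is the construction of the invariant denominator $N$.
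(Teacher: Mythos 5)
Your proof is correct, and it is worth noting that the paper itself gives no argument at all for this theorem: its ``proof'' is just the citation ``See \cite{Kemp} 3.7.8.'' You have essentially reconstructed the standard argument that the cited reference contains. Your reading of the statement is also the right one: as literally printed, $K(V)^{\Gamma}=Quot(K[V])^{\Gamma}$ is a tautology since $K(V)$ is defined to be $Quot(K[V])$; the content actually used later (in Algorithm 1 and in the definition of the quotient curve, where one needs invariant rational functions to be expressible as ratios of invariant polynomials) is $K(V)^{\Gamma}=Quot\bigl(K[V]^{\Gamma}\bigr)$, which is what you prove. The invariant-denominator trick --- replacing $f/g$ by $\bigl(f\prod_{\pi\neq\mathrm{id}}g^{\pi}\bigr)/\prod_{\pi\in\Gamma}g^{\pi}$ and observing that the new denominator is an orbit product, hence invariant, and the new numerator is then a product of invariants --- is exactly the right idea, and your bookkeeping (the extension of the action to the fraction field, nonvanishing of $N$ because $K[V]$ is a domain, and the conclusion $f'=hN\in K[V]\cap K(V)^{\Gamma}=K[V]^{\Gamma}$) is complete. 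Your two closing remarks are also accurate and relevant to the paper: the argument needs no hypothesis on $char(K)$, unlike the preceding theorem's Reynolds-operator argument, and it transfers verbatim to the coordinate ring of an irreducible $\Gamma$-stable variety, which is precisely the setting in which the authors apply it to compute $K(C/\Gamma)$.
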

\begin{proof}
See \cite{Kemp} 3.7.8.
\end{proof}
\begin{definition}
Let $C$ be a curve and let $\Gamma$ be a 
subgroup of the automorphism group $Aut(C)$. Then quotient curve $C/\Gamma$ is the curve whose function field is given by\\
\begin{displaymath}
K(C/\Gamma):=\{f\in K(C)| f=f\circ\pi \ \textrm{ for all } \pi \in \Gamma \}.
\end{displaymath}
\end{definition}

Here is our algorithm for computing a quotient curve, which we will make use of in this article.

\begin{algorithm}\label{A1} 
\textbf{Input:}
A curve $C$ and $\Gamma$ a subgroup of the automorphism group $Aut(C)$.
\textbf{Output:}
The set of polynomials defining the quotient variety $C/\Gamma$.
\begin{enumerate}
\item Compute the set of polynomials $F$ invariant under $\Gamma$ defining the variety $C$.
\item Compute fundamental set of invariants $\{I_{1}(x),\ldots ,I_{r}(x) \}$.
\item Compute Gr\"obner basis $G_{0}$ for the ideal generated by $\{I_{1}(x)-y_{1},\ldots ,I_{r}(x)-y_{r} \}$ in $K[x_{1},\ldots,x_{n},y_{1},\ldots ,y_{r}]$.
\item Compute Gr\"obner basis $G_{1} $ of $F \cup G_{0}$.
\item Compute $H:=G_{1}\cap K[y_1,\ldots y_r]$. Then $V(H)=C/ \Gamma$.
\end{enumerate}
\end{algorithm}

 It is clear from the definition of the curve we are studying, which is defined by
 $ x^2+y^2+z^2+w^2=0,$    $ x^3+y^3+z^3+w^3=0$, that the symmetric group $S_4$ is 
a subgroup of the automorphism group of $C$.
We are going to see later that the following quotient curves are important:
\begin{eqnarray*}
&C/(1,2)\\
&C/(1,2,3)\\
&C/(1,2,3,4).
\end{eqnarray*}
Here we identify $1$ with $x$, 2 with $y$, and so on, so the permutation $(1,2,3)$
denotes the map that sends $x$ to $y$, and $y$ to $z$, and $z$ to $x$, and fixes  $w$.

The next sections study these quotients, using Algorithm 1.

 \section{ First Things}\label{firstthings}

For the rest of this paper, $C$ will denote the curve in $\mathbb{P}^3(\overline{\mathbb{Q}})$ defined by
the two equations
\begin{displaymath}
   \begin{array}{c}
                     F_1(x,y,z,w)=     x^2+y^2+z^2+w^2=0 \\
                     F_2(x,y,z,w)=     x^3+y^3+z^3+w^3=0. \\
                          \end{array} 
\end{displaymath}

  \begin{lemma} The curve $C$ is nonsingular over $\overline{\mathbb{Q}}$, and $C$ has good reduction modulo $p$  for all $p \geq 5$.
  \end{lemma}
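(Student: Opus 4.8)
The plan is to verify nonsingularity by a Jacobian-rank computation on the affine pieces, and then observe that the same computation, read modulo $p$, still goes through whenever $p \geq 5$. First I would work in one of the standard affine charts of $\mathbb{P}^3$, say $w = 1$ (by symmetry of $F_1,F_2$ under $S_4$, all four charts are interchangeable, so it suffices to treat one and then note that the points with $w=0$ are covered by the chart $x=1$, etc.). In the chart $w=1$ the curve is $f_1 = x^2+y^2+z^2+1$, $f_2 = x^3+y^3+z^3+1$, and $C$ is singular at a point $P$ iff the Jacobian matrix
\[
J = \begin{pmatrix} 2x & 2y & 2z \\ 3x^2 & 3y^2 & 3z^2 \end{pmatrix}
\]
has rank $\leq 1$ at $P$. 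The $2\times 2$ minors of $J$ are (up to the nonzero scalar $6$) the expressions $xy(x-y)$, $yz(y-z)$, $xz(x-z)$, so rank $\leq 1$ forces any two of $x,y,z$ to be equal or zero. One then runs through the finitely many resulting cases — e.g. $x=y$, or $x=y=z$, or $x=0$, and their combinations — substitutes into $f_1=f_2=0$, and checks that each case yields no common solution (over $\overline{\mathbb{Q}}$, and also after reduction mod $p$ for $p\geq 5$). For instance $x=y=z$ gives $3x^2+1=0$ and $3x^3+1=0$, which forces $x = 1$ hence $3+1=0$ and $3+1=0$, impossible in characteristic $0$ and in characteristic $p$ unless $p \mid 4$, i.e. $p = 2$; similarly the two-variables-equal and the vanishing-coordinate cases each collapse to a small system of numerical equations whose only obstructions to solvability are the primes $2$ and $3$.

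For the good-reduction claim I would invoke the standard fact that a smooth projective complete intersection in $\mathbb{P}^3$ cut out by two forms with $\mathbb{Z}[1/N]$-coefficients has good reduction at every prime not dividing $N$, provided the reduced fibre is again smooth of the expected dimension; concretely, the scheme $\mathcal{C} = \mathrm{Proj}\,\mathbb{Z}[x,y,z,w]/(F_1,F_2)$ is flat over $\mathbb{Z}$ (the two forms form a regular sequence generically, and flatness over the Dedekind ring $\mathbb{Z}$ follows since $\mathcal{C}$ is torsion-free), so it suffices to show each fibre $\mathcal{C}_p$ is a smooth curve. But the Jacobian-minor analysis above is entirely polynomial with integer coefficients, and the only denominators/obstructing primes that appear are $2$ (from $\mathrm{char}\neq 2$, needed so the minors of $J$ are genuinely $xy(x-y)$ etc. and so $2x,2y,2z$ detect vanishing) and $3$ (from the exponent $3$ in $F_2$ and from equations like $3x^2+1=0$). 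Hence for every $p \geq 5$ the reduction $\mathcal{C}_p$ is nonsingular of dimension $1$, which is exactly good reduction.

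The main obstacle — really the only place requiring care — is the bookkeeping in the case analysis: one must make sure the list of cases arising from "two of $x,y,z$ equal or zero" is exhaustive, must not forget the points at infinity $w=0$ (handled by symmetry via another chart, where one of the other coordinates is set to $1$ and the role of the "missing" coordinate $w$ is played symmetrically), and must check in each case that the obstruction prime is always $\leq 3$ and never $\geq 5$. I would organize this as a short finite table. A mild subtlety worth stating explicitly is that one should also confirm $C$ is irreducible (a curve, not a union) over $\overline{\mathbb{Q}}$ and over $\overline{\mathbb{F}}_p$ — this follows from smoothness together with connectedness of a complete intersection of positive-dimensional hypersurfaces in $\mathbb{P}^3$ (Bézout gives the expected degree $6$ and dimension $1$), so no extra work beyond the genus computation that follows in this section.
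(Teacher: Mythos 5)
Your proposal is correct and follows essentially the same route as the paper: pass to the affine chart $w=1$, compute the $2\times 2$ minors $6xy(x-y)$, etc.\ of the Jacobian matrix, and observe that the only obstruction primes arising from the resulting case analysis are $2$ and $3$. In fact your write-up is more careful than the paper's, which jumps from the vanishing of the minors directly to ``$x=y=z=0$'' and omits the case-by-case check (e.g.\ $x=y=z$, $x=y,\ z=0$) that you correctly identify as the real content of the argument.
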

  
 \begin{proof}
It is enough to prove the result for an affine version (say $w=1$) of  $C$,
which is non-singular if and only if \\
  \begin{displaymath}
 \left[ \begin{array}{ccc} \frac{\partial F_1}{\partial x} & \frac{\partial F_1}{\partial y} &  \frac{\partial F_1}{\partial z}  \\ \frac{\partial F_2}{\partial x} & \frac{\partial F_2}{\partial y} & \frac{\partial F_2}{\partial z} \end{array} \right]
 \end{displaymath}
 has  full rank.  Consider the submatrix
  \begin{displaymath}
 \left[ \begin{array}{cc} \frac{\partial F_1}{\partial x} & \frac{\partial F_1}{\partial y}   \\ \frac{\partial F_2}{\partial x} & \frac{\partial F_2}{\partial y}  \end{array} \right]
= \left[ \begin{array}{cc} 2x & 2y\\
                              3x^2 & 3y^2
 \end{array} \right]
\end{displaymath}
 with determinant $6xy(x-y)=0$.
 If $p\neq 2,~3$ this implies $x=y$ or $x=0$ or $y=0$. Similarly considering the determinants 
 of other $2\times 2$ submatrices we get $x=y=z=0$ which is not a point on the curve. 
\end{proof}

\begin{theorem}\label{GENUS} The genus of  $C$  is $4$.
\end{theorem}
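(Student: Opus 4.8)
The curve $C$ is a complete intersection of a quadric and a cubic in $\mathbb{P}^3$, so the plan is to compute its genus via the adjunction formula for complete intersections, having already established nonsingularity in the preceding lemma. Concretely, for a smooth complete intersection curve $C = V(F_1, F_2) \subset \mathbb{P}^3$ with $\deg F_1 = d_1$ and $\deg F_2 = d_2$, the canonical sheaf is $\omega_C \cong \mathcal{O}_C(d_1 + d_2 - 4)$, obtained from the adjunction formula $\omega_C = \omega_{\mathbb{P}^3} \otimes \mathcal{O}(F_1) \otimes \mathcal{O}(F_2)|_C$ together with $\omega_{\mathbb{P}^3} = \mathcal{O}(-4)$. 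Then the genus is recovered from $2g - 2 = \deg \omega_C = d_C \cdot (d_1 + d_2 - 4)$, where $d_C = \deg C = d_1 d_2$ is the degree of the curve (by Bézout, the intersection number of the two surfaces).

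First I would record $d_1 = 2$, $d_2 = 3$, so $\deg C = 6$ and $d_1 + d_2 - 4 = 1$; hence $2g - 2 = 6 \cdot 1 = 6$, giving $g = 4$. That is the whole computation. To make the argument self-contained without invoking the adjunction formula for complete intersections as a black box, an alternative is to use the Hilbert polynomial / arithmetic genus: the Koszul resolution
\[
0 \to \mathcal{O}_{\mathbb{P}^3}(-d_1 - d_2) \to \mathcal{O}_{\mathbb{P}^3}(-d_1) \oplus \mathcal{O}_{\mathbb{P}^3}(-d_2) \to \mathcal{O}_{\mathbb{P}^3} \to \mathcal{O}_C \to 0
\]
gives the Hilbert polynomial $P_C(m) = \binom{m+3}{3} - \binom{m - d_1 + 3}{3} - \binom{m - d_2 + 3}{3} + \binom{m - d_1 - d_2 + 3}{3}$, and the arithmetic genus is $p_a = 1 - P_C(0)$; plugging in $d_1 = 2$, $d_2 = 3$ yields $p_a = 4$, and since $C$ is smooth (hence irreducible, by the earlier lemma, or one checks irreducibility separately) the geometric genus equals the arithmetic genus.

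The main obstacle is not the arithmetic but ensuring the hypotheses of the chosen genus formula are genuinely in force: the adjunction/Koszul computation requires that $C$ be a smooth complete intersection, which means both that $V(F_1, F_2)$ is smooth of the expected dimension $1$ (supplied by the previous lemma over $\overline{\mathbb{Q}}$) and that it is connected/irreducible so that arithmetic genus equals geometric genus. Smoothness of a positive-dimensional complete intersection in $\mathbb{P}^3$ already forces connectedness (a smooth complete intersection of dimension $\geq 1$ is connected, since its structure sheaf has $h^0 = 1$ from the Koszul complex), so the only real care needed is to invoke this correctly; alternatively one can note directly that the genus-$1$ and genus-$2$ quotients constructed later, together with Kani–Rosen, force $g(C) \geq 4$, and the complete-intersection bound forces $g(C) \leq 4$. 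I would present the adjunction computation as the clean main proof and mention the Hilbert-polynomial route as a remark.
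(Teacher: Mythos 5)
Your proof is correct, but it takes a genuinely different route from the one the paper uses. The paper's argument eliminates $z$ to obtain a plane sextic model $(x^3+y^3+1)^2+(x^2+y^2+1)^3=0$, locates its six singular points (each of multiplicity $2$), and applies the classical genus formula for singular plane curves, $g=\binom{n-1}{2}-\sum_i \frac{v_{P_i}(v_{P_i}-1)}{2}=10-6=4$. Your argument instead treats $C$ directly as a smooth complete intersection of a quadric and a cubic in $\mathbb{P}^3$ and reads off $2g-2=d_1d_2(d_1+d_2-4)=6$ from adjunction (equivalently from the Koszul resolution and the Hilbert polynomial). Your route is cleaner and avoids two points that the paper's projection argument quietly depends on: that the projection to the plane model is birational, and that the six double points are ordinary so that each contributes exactly $\delta_P=1$ to the genus drop. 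In exchange, the paper's computation is more elementary and produces an explicit plane model that is reused implicitly when the quotients are computed. Your attention to connectedness (via $h^0(\mathcal{O}_C)=1$ from the Koszul complex) is exactly the hypothesis one must verify before equating arithmetic and geometric genus, and it is handled correctly; note also that the source of the paper contains, after the end of the compiled document, a sketch of precisely your complete-intersection argument citing Hartshorne, Ex.~II.8.4(g), so the authors evidently regarded it as a valid alternative.
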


\begin{proof}
  Eliminating $z$ from affine equation of the curve above, we get 
 an irreducible plane curve with some singularities given by

 \begin{displaymath}
 f:=(x^3+y^3+1)^2+(x^2+y^2+1)^3=0.
 \end{displaymath}
 For singularites,
 $ \frac{\partial f}{\partial x}=0,~~ \frac{\partial f}{\partial y}=0$
 and solving these gives
\begin{displaymath}
 x^3+y^3+1=0, \\
  x^2+y^2+1=0.
 \end{displaymath}
There are six solutions to these equations, they are the points
$(a,(1+a^3)/(1+a^2))$ where $a$ is a root of
$2x^6+3x^4+2x^3+3x^2+2$.

Let $P=(a,b)$ be a singularity.
Then
$f(x+a,y+b)=((x+a)^3+(y+b)^3+1)^2+((x+a)^2+(y+b)^2+1)^3=F_0+F_1+\ldots+F_6$.
where $F_i$ is a form of degree $i$.

We have $F_0=F_1=0$ and
$F_2=(3a^2x+3b^2y)^2 \neq 0$ so the multiplicity of $f$ at $P$ is $2$.
Since $P$ was arbitrary $v_{P_i}=Mult_{P_i}C=2$.
 By the genus formula \cite{Abhyankar}, 148,
\[
genus(C)=\frac{(n-1)(n-2)}{2}-\sum_{i=1}^{6} \frac{(v_{P_i})(v_{P_i}-1)}{2}=10-6=4.
\]
  \end{proof}
 We note for the record that the curve $C$ is absolutely irreducible over $\mathbb{Q}$ and $\mathbb{F}_p$ for $p\geq 5$, but we will not use it in this paper.

\section{Genus 1 and Genus 0 Quotients}\label{g1quots}

We discuss the affine model of $C$ with $w=1$.

\begin{theorem}The quotient curve $C/(1,2)$ is an elliptic curve over $\overline{\mathbb{Q}}$.
\end{theorem}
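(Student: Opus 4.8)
The plan is to apply Algorithm 1 with $C$ the affine curve $F_1=F_2=0$ (setting $w=1$) and $\Gamma=\langle(1,2)\rangle$, the order-$2$ group swapping $x$ and $y$. First I would compute a fundamental set of invariants for $\Gamma$ acting on $\overline{\mathbb{Q}}[x,y,z]$: since $\Gamma$ only permutes $x$ and $y$, the invariant ring is $\overline{\mathbb{Q}}[s,p,z]$ where $s=x+y$ and $p=xy$ (with $z$ already fixed), so we introduce coordinates $u_1=s$, $u_2=p$, $u_3=z$. Next I would rewrite $F_1$ and $F_2$ in terms of these: $x^2+y^2 = u_1^2-2u_2$, so $F_1$ becomes $u_1^2-2u_2+u_3^2+1=0$; and $x^3+y^3 = u_1^3-3u_1u_2$, so $F_2$ becomes $u_1^3-3u_1u_2+u_3^3+1=0$. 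From the first relation $u_2=(u_1^2+u_3^2+1)/2$, which we substitute into the second to eliminate $u_2$, obtaining a single equation $G(u_1,u_3)=0$ of degree $3$ in $u_1$ and degree $3$ in $u_3$ — this is the (affine model of the) quotient curve $C/(1,2)$, exactly as produced by steps 3--5 of the algorithm (the Gröbner basis elimination just carries out this substitution in a principled way).

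The second task is to verify that the resulting plane curve $G(u_1,u_3)=0$ has geometric genus $1$ and is nonsingular (or has a smooth model of genus $1$), so that it is an elliptic curve over $\overline{\mathbb{Q}}$ — note that since the quotient map $C\to C/(1,2)$ has degree $2$ and $C$ has genus $4$, Riemann--Hurwitz forces the quotient genus to be at most $2$, so we only need to rule out genus $0$ and confirm it is exactly $1$. I would do this by computing the partial derivatives of $G$, solving for the singular points, and applying the plane-curve genus formula (as in the proof of Theorem~\ref{GENUS}): a degree-$(3,3)$ curve, viewed appropriately, has arithmetic genus $4$, and one expects three nodes to bring the geometric genus down to $1$. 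Alternatively, and more cleanly, I would put $G$ into Weierstrass form by a rational change of variables (solving the quadratic-in-$u_2$ step already suggests $G$ is cubic in one variable), exhibit an explicit point on it, and thereby present it as $y^2 = (\text{cubic in }x)$ with nonzero discriminant; checking the discriminant is nonzero over $\overline{\mathbb{Q}}$ (equivalently, that the cubic has distinct roots) then finishes the proof.

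The main obstacle I anticipate is the genus computation for $G(u_1,u_3)=0$: the equation has moderately large coefficients, so locating all singular points and checking their multiplicities (and whether they are ordinary, hence contribute the expected amount to the genus drop) is the delicate step. If any singularity turns out to be non-ordinary or the curve fails to be irreducible, the genus formula argument would need refinement (e.g. passing to a resolution or using the Hurwitz formula with a careful ramification count for $C\to C/(1,2)$). A safer route, which I would actually pursue, is to bypass the singularity analysis entirely: reduce $G=0$ to a hyperelliptic-type equation $v^2 = q(u)$ with $\deg q \le 4$ by completing the square in one of the variables (the equation is quadratic in $u_2$ before elimination, and this quadratic structure typically persists), identify it as genus $1$ by inspecting $\deg q$ and the discriminant, and then invoke that a smooth genus-$1$ curve with a rational point is an elliptic curve. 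This keeps the argument to a short explicit computation rather than a full resolution of singularities.
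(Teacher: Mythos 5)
Your proposal follows the paper's proof essentially verbatim: the paper applies Algorithm 1 with the fundamental invariants $a=x+y$, $b=z$, $c=x^2+y^2$ (equivalent to your $s,p,z$), eliminates to obtain the cubic $a^3+3ab^2+3a-2b^3-2=0$ --- exactly the equation your substitution produces --- and then passes to Weierstrass form $y^2-3xy-9y=x^3-\frac{27}{2}x-27$ by an explicit birational change of variables. One small correction to your anticipated obstacle: the eliminated equation has total degree $3$, so it is a plane cubic of arithmetic genus $1$ rather than a bidegree-$(3,3)$ curve of arithmetic genus $4$, the singularity/genus-drop analysis you worry about is unnecessary, and your ``safer route'' via Weierstrass form is precisely what the paper does.
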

\begin{proof} We have $(1,2):=\left\{ \begin{array}{rcl}
                          x\longmapsto y \\
                          y\longmapsto x.\\
                          \end{array} \right. $\\
 
This can be done directly but we proceed to use Algorithm \ref{A1}.
The fundamental invariants are  $a:=x+y,~b:=z,~ c:=x^2 + y^2$.
 The Groebner basis is found with MAGMA to be
 $G_0= \{x+y-a,~y^{2}-ya+ \frac{1}{2}a^{2}- \frac{1}{2}c,~z - b\}$
and $G_{1}:=\{x+y-a,~y^2-ya+ \frac{1}{2}a^{2}- \frac{1}{2}c,~z-b,~a^3-3ac+2bc+2b-2,~b^2+c+1\}$.
which gives $G_{2}=\{a^3-3ac + 2bc + 2b-2,b^2 + c + 1\}$.
Here $G_{2}$ is the defining ideal for the $K(C/(1,2))$. 
Substituting the second equation of $G_2$ into the first gives $a^3+3ab^{2}+ 3a-2b^3-2=0$ 
which on homegenization gives $a^3+3ab^{2}+ 3aw^2-2b^3-2w^3=0$.
Under the bi-rational subsitution $x=3a,~y=9b,~z=-a+b+w$ and putting $z=1$ we get
 $y^2 - 3xy - 9y = x^3 - \frac{27}{2}x - 27$.
\end{proof}

    \begin{theorem} 
  The quotient curve $C/(1,2,3,4)$ is an elliptic curve.
  \end{theorem}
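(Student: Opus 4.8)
The plan is to follow the template of the previous theorem --- run Algorithm \ref{A1} on the $4$-cycle to extract an explicit model --- but to obtain (and cross-check) the genus by a Riemann--Hurwitz count, which is the cleanest route to the statement. Observe first that $\sigma:=(1,2,3,4)$ acts on $\mathbb{P}^3$ by the linear map $[x:y:z:w]\mapsto[y:z:w:x]$; since $F_1$ and $F_2$ are symmetric, $\sigma\in\mathrm{Aut}(C)$ and $\Gamma:=\langle\sigma\rangle\cong\mathbb{Z}/4\mathbb{Z}$. Here one works with the homogeneous coordinates rather than the affine chart $w=1$ used above, since $\sigma$ fixes none of $x,y,z,w$. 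The quotient map $\pi\colon C\to C/\Gamma$ is then a degree-$4$ morphism of smooth projective curves.

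For the genus I would compute the ramification of $\pi$, namely the points of $C$ with nontrivial $\Gamma$-stabiliser. Because $\sigma$ is linear, its fixed points on $C$ are exactly $C\cap\mathrm{Fix}_{\mathbb{P}^3}(\sigma)$, and $\mathrm{Fix}_{\mathbb{P}^3}(\sigma)=\{[1:\zeta:\zeta^2:\zeta^3]:\zeta^4=1\}$; substituting into $F_1,F_2$ shows that $[1:1:1:1]$ and $[1:-1:1:-1]$ are not on $C$, while $P_1=[1:i:-1:-i]$ and $P_2=[1:-i:-1:i]$ are. Similarly $\sigma^2=(1,3)(2,4)$ fixes the two lines $\{[s:t:s:t]\}$ and $\{[s:t:-s:-t]\}$ of $\mathbb{P}^3$, the first of which misses $C$ and the second of which meets $C$ only in $P_1,P_2$. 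Hence $P_1$ and $P_2$ are the only ramification points of $\pi$, each with ramification index $4$, and Riemann--Hurwitz (tame, since $\mathrm{char}=0$) together with $g_C=4$ from Theorem \ref{GENUS} gives
\[
2\cdot 4-2 \;=\; 4\,(2\,g_{C/\Gamma}-2)+2\,(4-1),
\]
so $g_{C/\Gamma}=1$. As we work over $\overline{\mathbb{Q}}$, a smooth projective curve of genus $1$ has a rational point and hence is an elliptic curve.

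To exhibit the curve concretely I would then run Algorithm \ref{A1}: compute a fundamental system of invariants for $\Gamma$ on $\overline{\mathbb{Q}}[x,y,z,w]$ (the elementary symmetric polynomials together with a few $C_4$- but not $S_4$-invariants, such as $xz+yw$ and $x^2y+y^2z+z^2w+w^2x$), adjoin variables $y_j=I_j$, compute a Gr\"obner basis of the ideal generated by $F_1,F_2$ and the $I_j-y_j$, eliminate $x,y,z,w$, and simplify and dehomogenise the resulting ideal to a plane cubic, which a birational change of coordinates puts in Weierstrass form. I expect this invariant-theoretic step and the ensuing elimination to be the main obstacle: unlike the $\mathbb{Z}/2$ case, the invariant ring of $\mathbb{Z}/4$ is not polynomial (its Hilbert series is $1+z+3z^2+5z^3+\cdots$), so it carries secondary invariants and a nontrivial relation ideal, which makes the Gr\"obner basis larger and the elimination ideal more laborious to reduce to a recognisable cubic. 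The Riemann--Hurwitz argument sidesteps all of this for the genus statement, but some form of the explicit computation is still needed to say which elliptic curve arises; its $j$-invariant provides a convenient check that the answer does not depend on the coordinate choices made along the way.
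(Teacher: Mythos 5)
Your proof is correct, and it takes a genuinely different route from the paper. The paper's entire argument here is computational: it runs Algorithm \ref{A1} (via MAGMA), reports that the Gr\"obner basis is too large to print, and simply states the resulting Weierstrass model $y^2 - 96xy + 110592y = x^3 + 3456x^2 + 14598144x - 5718933504$. You instead prove the qualitative statement by Riemann--Hurwitz: the fixed locus of $\sigma=(1,2,3,4)$ in $\mathbb{P}^3$ consists of the four points $[1:\zeta:\zeta^2:\zeta^3]$ with $\zeta^4=1$, of which exactly $P_1=[1:i:-1:-i]$ and $P_2=[1:-i:-1:i]$ lie on $C$ (indeed $F_1=F_2=0$ there, while $F_1=4\neq 0$ at $\zeta=\pm 1$); the fixed lines of $\sigma^2$ contribute no further stabilised points since $[s:t:s:t]$ forces $t=\pm is$ and $t^3=-s^3$ simultaneously, which is impossible, and $[s:t:-s:-t]$ meets $C$ only in $P_1,P_2$ again. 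Your Riemann--Hurwitz count $6 = 4(2g'-2)+2\cdot 3$ then gives $g'=1$, and over $\overline{\mathbb{Q}}$ a smooth genus-$1$ curve is elliptic. This is arguably a more rigorous proof of the stated theorem than the paper offers, since it does not rest on an unprinted machine computation; what it does not deliver is the explicit equation, which the paper needs later (Theorem \ref{TH1} compares $j$-invariants of $C/(1,2)$ and $C/(1,2,3,4)$). You correctly flag that the elimination step is still required for that, and your observation that the $\mathbb{Z}/4$-invariant ring is non-polynomial (Hilbert series $1+z+3z^2+5z^3+\cdots$ by Molien) accurately explains why the paper's Gr\"obner basis blows up in this case. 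The two approaches are complementary: yours proves the theorem cleanly, the paper's produces the model.
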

  \begin{proof} 
  This time the Gr\"obner basis is too large to include.
  MAGMA gives $C/(1,2,3,4)$ as an elliptic curve defined by
    $$y^2 - 96xy + 110592y = x^3 + 3456x^2 + 14598144x -5718933504$$ over $\overline{\mathbb{Q}}$.\\
  \end{proof}

\begin{theorem}\label{TH1} 
The quotient curves $C/(1,2)$ and $C/(1,2,3,4)$ are isomorphic over $\overline{\mathbb{Q}}$.
\end{theorem}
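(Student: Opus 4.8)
The plan is to reduce the statement to an equality of $j$-invariants. Over the algebraically closed field $\overline{\mathbb{Q}}$ two elliptic curves are isomorphic if and only if they have the same $j$-invariant, so it suffices to compute $j$ from the two Weierstrass models produced in the preceding two theorems and observe that the values agree; one may then, if desired, write down an explicit admissible change of variables realising the isomorphism.

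First I would treat $C/(1,2)$, given above by $y^2 - 3xy - 9y = x^3 - \tfrac{27}{2}x - 27$. Reading off $(a_1,a_2,a_3,a_4,a_6) = (-3,0,-9,-\tfrac{27}{2},-27)$ and forming the usual quantities gives $b_2 = 9$, $b_4 = 0$, $b_6 = -27$, $b_8 = -\tfrac{243}{4}$, hence $c_4 = b_2^2 - 24b_4 = 81$ and $\Delta = -b_2^2 b_8 - 8 b_4^3 - 27 b_6^2 + 9 b_2 b_4 b_6 = -\tfrac{59049}{4} \neq 0$, so that $j(C/(1,2)) = c_4^3/\Delta = -36$.

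Next I would run the same computation for
\[
C/(1,2,3,4):\quad y^2 - 96xy + 110592y = x^3 + 3456x^2 + 14598144x - 5718933504.
\]
The coefficients are large, so in practice this is done in MAGMA, as elsewhere in the paper; one obtains $c_4 = 2^{20}\cdot 81$, $c_6 = 2^{30}\cdot 5103$ and $\Delta = -2^{58}\cdot 3^{10} \neq 0$, whence $j(C/(1,2,3,4)) = c_4^3/\Delta = -36$ as well. Since the two $j$-invariants coincide and the ground field is algebraically closed, the curves are isomorphic; an explicit isomorphism is obtained by the admissible transformation that scales with $u = 32$ (so that $u^4$ matches the ratio of the $c_4$'s and $u^6$ the ratio of the $c_6$'s), followed by the translations $r,s,t$ clearing the $x^2$-, $xy$- and $y$-coefficients.

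There is no conceptual obstacle; the only thing to be careful about is the arithmetic bookkeeping in the second computation, together with a trivial check that both discriminants are nonzero (so that both quotients are genuinely elliptic curves). It is worth noting that this isomorphism cannot arise from the $S_4$-action itself: $(1,2)$ and $(1,2,3,4)$ have different orders, hence are not conjugate in $\mathrm{Aut}(C)$, and neither of the two cyclic groups contains the other, so there is no quotient map between the two curves --- the isomorphism is a genuine coincidence of the two Weierstrass models.
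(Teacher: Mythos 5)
Your proposal is correct and follows the same route as the paper: both quotients have $j$-invariant $-36$, and over the algebraically closed field $\overline{\mathbb{Q}}$ this forces an isomorphism (the paper additionally records that the isomorphism is already defined over $\mathbb{Q}$ and gives it explicitly). Your intermediate quantities ($c_4=81$, $\Delta=-3^{10}/4$ for the first model and $c_4=2^{20}\cdot 81$, $\Delta=-2^{58}\cdot 3^{10}$ for the second, with scaling factor $u=32$) all check out.
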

\begin{proof}
Both the quotient curves have the same j-invariant equal to -36. Hence they are isomorphic over $\overline{\mathbb{Q}}$ . In fact, the isomorphism is defined  over $\mathbb{Q}$ and is given by\\
\begin{displaymath} 
\phi:C/(1,2)\rightarrow C/(1,2,3,4),~~~~~ (x,y)\rightarrow (1024x - 1152,~32768y - 2580481).
\end{displaymath}
Their  Weierstrass form is given by\\
\begin{displaymath}
y^2 = x^3 - 27x - 378.
\end{displaymath}
\end{proof}

\begin{remark}
MAGMA tells us that the endomorphism ring of $C/(1,2)$ 
is $\mathbb{Z}$, and so this elliptic curve does not have 
complex multiplication.
\end{remark}

\begin{theorem}\label{genus0}
The genus of the quotient curve $C/S_4$  is $0$.
\end{theorem}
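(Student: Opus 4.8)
The plan is to apply Algorithm \ref{A1} with $\Gamma = S_4$ acting on the affine model of $C$ (say $w=1$). First I would record the fundamental invariants of $S_4$ acting on $\overline{\mathbb{Q}}[x,y,z]$ (or on all four variables): these are the elementary symmetric polynomials $e_1 = x+y+z+w$, $e_2$, $e_3$, $e_4$, which generate the full invariant ring (a fact that follows from the classical theorem on symmetric functions, or can be confirmed via Molien's theorem as in Section \ref{alg1}). Since both defining equations $F_1$ and $F_2$ are symmetric, they can be rewritten as polynomials in $e_1,\dots,e_4$: namely $F_1 = e_1^2 - 2e_2$ and $F_2 = e_1^3 - 3e_1 e_2 + 3e_3$ (Newton's identities). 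So the quotient $C/S_4$ embeds in $\mathbb{A}^4$ (with coordinates $e_1,e_2,e_3,e_4$) as the variety cut out by $e_1^2 - 2e_2 = 0$ and $e_1^3 - 3e_1 e_2 + 3 e_3 = 0$, together with whatever relations among the $e_i$ are forced by running the elimination step of the algorithm.

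The next step is to solve those two relations: $e_2 = e_1^2/2$ and then $e_3 = (3e_1 e_2 - e_1^3)/3 = e_1^3/6$. This leaves $e_1$ and $e_4$ as the only genuinely free coordinates, with $e_2, e_3$ expressed as polynomials in $e_1$ alone. Hence the function field $\overline{\mathbb{Q}}(C/S_4)$ is generated by $e_1$ and $e_4$ with no remaining algebraic relation between them forced by $F_1, F_2$ — but one must be careful, because in passing to the affine chart $w=1$ we have $e_4 = xyzw = xyz$, and the elimination in Algorithm \ref{A1} may produce exactly the relation that expresses the constraint coming from having only three affine coordinates. Concretely: once $e_1, e_2, e_3$ are pinned down as above and $x,y,z$ are the roots of $t^3 - (e_1 - 1)t^2 + (e_2 - e_1 + 1)t - (e_3 - e_2 + 1) = 0$ (using $w = 1$, so $e_i$ here are the symmetric functions of all four including $w$), the value $e_4 = xyz$ is determined by $e_1, e_2, e_3$, hence by $e_1$ alone. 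So in fact the function field is $\overline{\mathbb{Q}}(e_1)$, a purely transcendental extension of $\overline{\mathbb{Q}}$ of transcendence degree $1$, which is the function field of $\mathbb{P}^1$. Therefore $C/S_4$ has genus $0$.

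I would present this cleanly by running Algorithm \ref{A1} explicitly (ideally citing the MAGMA computation, in the spirit of the surrounding theorems): take the invariants, form $G_0$ from $\{e_i(x,y,z,1) - y_i\}$, compute the Gröbner basis $G_1$ of $F \cup G_0$, and extract $H = G_1 \cap \overline{\mathbb{Q}}[y_1,y_2,y_3,y_4]$. The output $H$ will, after the substitutions above, define a rational curve, and exhibiting a rational parametrization (equivalently, showing $V(H)$ is birational to $\mathbb{P}^1$ — e.g. all coordinates are rational functions of a single parameter) proves genus $0$. Alternatively, and perhaps more elegantly, one can avoid the algorithm entirely: the covering $C \to C/S_4$ has degree $24$, and $C/S_4$ is dominated by the obviously rational family described above, so $\overline{\mathbb{Q}}(C/S_4)$ sits inside a purely transcendental field and is itself rational by Lüroth's theorem; combined with the Kani–Rosen philosophy this forces the genus to be $0$.

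The main obstacle I anticipate is bookkeeping around the affine chart: it is easy to conflate the symmetric functions of $(x,y,z)$ with those of $(x,y,z,w)$, and the elimination step really does need the $w=1$ specialization to collapse the would-be two-dimensional image down to a curve. Getting the relation $H$ right — and verifying it genuinely cuts out a genus-$0$ curve rather than something reducible or higher genus — is where care is needed; invoking Lüroth (the function field is a subfield of a rational function field in one variable, since everything is expressible in $e_1$) is the safest route to sidestep a messy parametrization. Apart from that, the argument is routine: it is essentially the observation that symmetric functions of the roots of the residual cubic are all controlled by a single parameter once $F_1$ and $F_2$ have eliminated two degrees of freedom.
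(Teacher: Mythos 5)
Your approach is essentially the paper's: run Algorithm \ref{A1} for $\Gamma=S_4$, express everything in fundamental invariants, and observe that the image is rational. The paper's only real difference is the choice of invariants: it uses the power sums $a=p_1,b=p_2,c=p_3,d=p_4$ rather than the elementary symmetric polynomials, which makes the elimination trivial because $p_2$ and $p_3$ \emph{are} the defining forms of $C$; the output is $G_2=\{b,c\}$ and the quotient is the weighted projective line with coordinates $(a:d)$ of weights $(1,4)$, i.e.\ $\mathbb{P}^1$ with parameter $d/a^4$. Your version with $e_1,\dots,e_4$ reaches the same conclusion via $e_2=e_1^2/2$, $e_3=e_1^3/6$.

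One point needs repair, and it is exactly the one you flag: $S_4$ does not preserve the chart $w=1$, so $e_1|_{w=1}=x+y+z+1=(x+y+z+w)/w$ is \emph{not} an element of $K(C)^{S_4}$ (its denominator $w$ is not invariant), and the statement ``the function field is $\overline{\mathbb{Q}}(e_1)$'' is not literally correct. The clean fix is to stay projective and use the degree-zero invariant functions $e_2/e_1^2$, $e_3/e_1^3$, $e_4/e_1^4$, which do generate $K(C)^{S_4}$: on $C$ the relations $p_2=p_3=0$ force $e_2/e_1^2=1/2$ and $e_3/e_1^3=1/6$, so the fixed field is contained in $\overline{\mathbb{Q}}(e_4/e_1^4)$ and L\"uroth gives genus $0$. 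With that substitution your argument is complete and matches the paper's.
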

\begin{proof}
 The fundamental invariants are $a:=x+y + z+ w,~ b:=x^2 + y^2 + z^2 + w^2,~c:=x^3 + y^3 + z^3 + w^3,~d:=x^4 + y^4 + z^4 + w^4.$\\
The Groebner Basis $G_0= \{x + y + z + w - a,y^2 + yz + yw - ya + z^2 + zw - za + w^2 - wa + 1/2a^2 - \frac{1}{2}b,z^3 + z^2w - z^2a + zw^2 - zwa + \frac{1}{2}za^2 - \frac{1}{2}zb + w^3 - w^2a + 1/2wa^2 - \frac{1}{2}wb - \frac{1}{6}a^3 + \frac{1}{2}ab - \frac{1}{3}c, w^4 - w^3a + 1/2w^2a^2 - \frac{1}{2}w^2b - \frac{1}{6}wa^3 + \frac{1}{2}wab - \frac{1}{3}wc + \frac{1}{24}a^4 - \frac{1}{4}a^2b + \frac{1}{3}ac + \frac{1}{8}b^2 - \frac{1}{4}d\}.$\\
$G_{1}=\{x + y + z + w - a,y^2 + yz + yw - ya + z^2 + zw - za + w^2 - wa + \frac{1}{2}a^2,z^3 + z^2w - z^2a + zw^2 - zwa + \frac{1}{2}za^2 + w^3 - w^2a + \frac{1}{2}wa^2 - \frac{1}{6}a^3, w^4 - w^3a +\frac{1}{2}w^2a^2 - \frac{1}{6}wa^3 + \frac{1}{24}a^4 - \frac{1}{4}d, b,c\}$,\\
which gives $G_2=\{b,c\}$.
The equations from $G_2$ define a projective line as asserted.\\
\end{proof}

\section{Genus 2 Quotient}\label{g2quots}

     \begin{theorem}The quotient curve $C/(1,2,3)$ is a hyperelliptic curve of genus $2$.
  \end{theorem}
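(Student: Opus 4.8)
The plan is to apply Algorithm \ref{A1} to the cyclic group $\Gamma = \langle (1,2,3)\rangle$ of order $3$ acting on $C$ by cyclically permuting $x,y,z$ and fixing $w$; since we work over $\overline{\mathbb{Q}}$ and $3 \nmid \mathrm{char}$, all the hypotheses (Theorem on $K[X]^{\Gamma}/(I\cap K[X]^{\Gamma})$, etc.) apply and the output is a model for $C/(1,2,3)$. First I would write down a fundamental system of invariants for the $3$-cycle: the symmetric functions $p_1 = x+y+z$, $p_2 = x^2+y^2+z^2$, $p_3 = x^3+y^3+z^3$ are invariant, but the cyclic group is larger than $S_3$ only in the sense that it is \emph{smaller}, so one also needs a genuinely cyclic (non-symmetric) invariant such as $q = x^2y + y^2z + z^2x$ (or equivalently $xy^2+yz^2+zx^2$, whose sum and product with $q$ are symmetric); together with $w$ these generate $\overline{\mathbb{Q}}[x,y,z,w]^{\Gamma}$. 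I would feed $\{p_1 - t_1, p_2 - t_2, p_3 - t_3, q - t_4, w - t_5\}$ together with $F_1, F_2$ to MAGMA, compute a Gr\"obner basis under an elimination order that eliminates $x,y,z$, and extract the generators lying in $\overline{\mathbb{Q}}[t_1,\ldots,t_5]$.

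Once the eliminated ideal is in hand, the next step is to simplify the resulting model. Using $F_1$ and $F_2$ one can immediately solve for $t_2$ and $t_3$ in terms of $t_5 = w$ (namely $p_2 = -w^2$ and $p_3 = -w^3$), leaving a curve in $t_1, t_4, w$; dehomogenizing by setting $w = 1$ should leave two equations in two variables $t_1, t_4$. I expect one of these to express $t_4$ (the cyclic invariant) quadratically, giving a relation of the shape $t_4^2 + (\text{stuff in } t_1)\, t_4 + (\text{stuff in } t_1) = 0$, which is exactly a hyperelliptic presentation once one completes the square: $C/(1,2,3)$ becomes $v^2 = g(t_1)$ for an explicit polynomial $g$. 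I would then have MAGMA put this in standard form and read off the genus from $\deg g$ (genus $2$ corresponds to $\deg g \in \{5,6\}$), confirming the curve is hyperelliptic of genus $2$ (the hyperelliptic map being $t_1$, of degree $2$). As a consistency check, the Kani-Rosen decomposition of Theorem \ref{Kani} applied to $S_4 \leq \mathrm{Aut}(C)$ should force $C/(1,2,3)$ to have genus $2$: combining $\epsilon_{\langle(1,2)\rangle}$, $\epsilon_{\langle(1,2,3)\rangle}$, $\epsilon_{\langle(1,2,3,4)\rangle}$, $\epsilon_{S_4}$ with the known genera $1,1,0$ and $g(C)=4$ from Theorem \ref{GENUS} pins down $g(C/(1,2,3)) = 2$.

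The main obstacle is step 2 together with step 4–5: the Gr\"obner basis for the cyclic quotient is, as the authors note for $C/(1,2,3,4)$, potentially very large, so the computation must be set up with a good monomial order and possibly over $\mathbb{Q}$ rather than a number field to remain feasible; and one must be careful that the chosen invariants genuinely generate the whole invariant ring (this is where Molien's series is the safeguard — one computes $\Phi_{\Gamma}(z)$ for the order-$3$ representation and checks that $1, p_1, p_2, p_3, q, \bar q, w$ with their syzygy account for the full Hilbert series, so that no invariant is missed). A secondary subtlety is making sure the resulting plane model is absolutely irreducible and that the singularity/degenerate-fiber analysis doesn't hide a drop in genus; I would verify irreducibility and smoothness of the hyperelliptic model directly in MAGMA. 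Given those checks, reading off \textbf{``hyperelliptic of genus $2$''} from the normalized equation $v^2 = g(t_1)$ with $\deg g = 5$ or $6$ completes the proof.
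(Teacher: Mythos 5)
Your proposal follows essentially the same route as the paper: Algorithm \ref{A1} applied to $\langle(1,2,3)\rangle$ with the power sums $x+y+z$, $x^2+y^2+z^2$, $x^3+y^3+z^3$ plus one cyclic (non-symmetric) cubic invariant, Gr\"obner elimination, the observation that $F_1,F_2$ force the second and third invariants to be constants, and then reading off a model quadratic in the cyclic invariant, which after completing the square is $v^2=g(t_1)$ with $\deg g=6$, hence genus $2$. Your added Kani--Rosen dimension count as a consistency check is a nice extra the paper does not include in this proof, but otherwise the arguments coincide.
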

   \begin{proof}We have $(1,2,3):=\left\{ \begin{array}{rcl}
                          x\longmapsto y \\
                          y\longmapsto z\\
                            z\longmapsto x.\\
                          \end{array} \right. $\\
 
 The fundamental invariants are $a:=x + y + z, b:=x^{2} + y^{2} + z^{2},c:=x^{3} + y^{3} + z^{3},d:=x^{2}z + xy^{2}+ yz^{2}.$\\

MAGMA gives the Groebner Basis $G_1$ to be $\{x + y + z - a,
    y^2 - \frac{4}{3}yd^5 + \frac{16}{3}yd^4 - \frac{28}{3}yd^3 + \frac{28}{3}yd^2 - \frac{17}{6}yd - \frac{1}{6}y + \frac{1}{24}z^2a^5d - \frac{1}{12}z^2a^5 - \frac{1}{12}z^2a^4d^2 + \frac{5}{2}4z^2a^4d - \frac{1}{12}z^2a^4 +
        \frac{1}{6}z^2a^3d^3 - \frac{1}{2}z^2a^3d^2 + \frac{3}{4}z^2a^3d - \frac{5}{6}z^2a^3 - \frac{1}{3}z^2a^2d^4 + \frac{7}{6}z^2a^2d^3 - 2z^2a^2d^2 + \frac{25}{12}z^2a^2d - \frac{1}{6}z^2a^2 +
        \frac{1}{2}z^2ad^3 - \frac{3}{2}z^2ad^2 + \frac{17}{8}z^2ad - \frac{5}{2}z^2a - z^2d^4 + \frac{7}{2}z^2d^3 - \frac{19}{4}z^2d^2 + \frac{25}{8}z^2d + \frac{3}{2}z^2 - \frac{1}{48}za^5 + \frac{1}{24}za^4d - \frac{1}{48}za^4
        - \frac{1}{12}za^3d^2 + \frac{1}{12}za^3d + \frac{1}{24}za^3 + \frac{1}{6}za^2d^3 - \frac{1}{4}za^2d^2 + \frac{5}{24}za^2 - \frac{1}{3}zad^4 + \frac{2}{3}zad^3 - \frac{1}{4}zad^2 - \frac{5}{12}zad + \frac{55}{48}za -
        \frac{2}{3}zd^5 + \frac{11}{3}zd^4 - \frac{49}{6}zd^3 + \frac{119}{12}zd^2 - \frac{145}{24}zd - \frac{1}{48}z - \frac{1}{48}a^5 + \frac{1}{24}a^4d - \frac{5}{48}a^4 - \frac{1}{12}a^3d^2 + \frac{1}{4}a^3d - \frac{7}{24}a^3 + \frac{1}{6}a^2d^3
        - \frac{7}{12}a^2d^2 + \frac{5}{6}a^2d - \frac{5}{8}a^2 + \frac{2}{3}ad^5 - 3ad^4 + 6ad^3 - \frac{83}{12}ad^2 + \frac{7}{2}ad - \frac{7}{16}a - \frac{1}{3}d^4 + \frac{7}{6}d^3 - \frac{7}{4}d^2 + \frac{29}{24}d + 3\frac{1}{48},
    yz + \frac{4}{3}yd^5 - \frac{16}{3}yd^4 + \frac{28}{3}yd^3 - \frac{28}{3}yd^2 + \frac{29}{6}yd - \frac{5}{6}y - \frac{1}{24}z^2a^5d + \frac{1}{12}z^2a^5 + \frac{1}{12}z^2a^4d^2 - \frac{5}{24}z^2a^4d + \frac{1}{12}z^2a^4 -
        \frac{1}{6}z^2a^3d^3 + \frac{1}{2}z^2a^3d^2 - \frac{3}{4}z^2a^3d + \frac{5}{6}z^2a^3 + \frac{1}{3}z^2a^2d^4 - \frac{7}{6}z^2a^2d^3 + 2z^2a^2d^2 - \frac{25}{12}z^2a^2d + \frac{2}{3}z^2a^2 -
        \frac{1}{2}z^2ad^3 + \frac{3}{2}z^2ad^2 - \frac{17}{8}z^2ad + \frac{5}{2}z^2a + z^2d^4 - \frac{7}{2}z^2d^3 + \frac{19}{4}z^2d^2 - \frac{25}{8}z^2d + z^2 + \frac{1}{48}za^5 - \frac{1}{24}za^4d + \frac{1}{48}za^4 +
        \frac{1}{12}za^3d^2 - \frac{1}{12}za^3d - \frac{1}{24}za^3 - \frac{1}{6}za^2d^3 + \frac{1}{4}za^2d^2 - \frac{5}{24}za^2 + \frac{1}{3}zad^4 - \frac{2}{3}zad^3 + \frac{1}{4}zad^2 + \frac{5}{12}zad - \frac{55}{48}za +
        \frac{2}{3}zd^5 - \frac{11}{3}zd^4 + \frac{49}{6}zd^3 - \frac{119}{12}zd^2 + \frac{169}{24}zd - \frac{95}{48}z + \frac{1}{48}a^5 - \frac{1}{24}a^4d + \frac{5}{48}a^4 + \frac{1}{12}a^3d^2 - \frac{1}{4}a^3d + \frac{7}{24}a^3 - \frac{1}{6}a^2d^3
        + \frac{7}{12}a^2d^2 - \frac{5}{6}a^2d + \frac{5}{8}a^2 - \frac{2}{3}ad^5 + 3ad^4 - 6ad^3 + \frac{83}{12}ad^2 - \frac{9}{2}ad + \frac{23}{16}a + \frac{1}{3}d^4 - \frac{7}{6}d^3 + \frac{7}{4}d^2 - \frac{29}{24}d + \frac{17}{48},
    ya + 2yd - y + \frac{1}{2}z^2a^2 + \frac{3}{2}z^2 + za + zd - 2z - \frac{1}{2}a^2 - ad + a + \frac{1}{2},
    yd^6 - 3yd^5 + 6yd^4 - 6yd^3 + \frac{33}{8}yd^2 - \frac{3}{4}yd + \frac{1}{8}y - \frac{1}{32}z^2a^5d^2 + \frac{1}{32}z^2a^5d - \frac{1}{32}z^2a^5 + \frac{1}{16}z^2a^4d^3 - \frac{3}{32}z^2a^4d^2 +
        \frac{3}{32}z^2a^4d - \frac{1}{32}z^2a^4 - \frac{1}{8}z^2a^3d^4 + \frac{1}{4}z^2a^3d^3 - \frac{9}{16}z^2a^3d^2 + \frac{7}{16}z^2a^3d - \frac{5}{16}z^2a^3 + \frac{1}{4}z^2a^2d^5 - \frac{5}{8}z^2a^2d^4 +
        \frac{11}{8}z^2a^2d^3 - \frac{19}{16}z^2a^2d^2 + \frac{13}{16}z^2a^2d - \frac{1}{16}z^2a^2 - \frac{3}{8}z^2ad^4 + \frac{3}{4}z^2ad^3 - \frac{51}{32}z^2ad^2 + {45}{32}z^2ad - \frac{27}{32}z^2a +
        \frac{3}{4}z^2d^5 - \frac{15}{8}z^2d^4 + \frac{51}{16}z^2d^3 - \frac{69}{32}z^2d^2 + \frac{27}{32}z^2d + \frac{21}{32}z^2 + \frac{1}{64}za^5d + \frac{1}{64}za^5 - \frac{1}{32}za^4d^2 - \frac{1}{64}za^4d + \frac{1}{64}za^4 +
        \frac{1}{16}za^3d^3 + \frac{3}{32}za^3d + \frac{5}{32}za^3 - \frac{1}{8}za^2d^4 + \frac{1}{16}za^2d^3 - \frac{3}{16}za^2d^2 - \frac{11}{32}za^2d + \frac{1}{32}za^2 + \frac{1}{4}zad^5 - \frac{1}{4}zad^4 +
        \frac{7}{16}zad^3 + \frac{1}{2}zad^2 + \frac{1}{64}zad + \frac{29}{64}za + \frac{1}{2}zd^6 - \frac{9}{4}zd^5 + \frac{39}{8}zd^4 - \frac{105}{16}zd^3 + \frac{159}{32}zd^2 - \frac{129}{64}zd - \frac{11}{64}z + \frac{1}{64}a^5d -
        \frac{1}{64}a^5 - \frac{1}{32}a^4d^2 + \frac{3}{64}a^4d - \frac{1}{64}a^4 + \frac{1}{16}a^3d^3 - \frac{1}{8}a^3d^2 + \frac{7}{32}a^3d - \frac{5}{32}a^3 - \frac{1}{8}a^2d^4 + \frac{5}{16}a^2d^3 - \frac{9}{16}a^2d^2 + \frac{13}{32}a^2d -
        \frac{1}{32}a^2 - \frac{1}{2}6 + \frac{7}{4}ad^5 - \frac{15}{4}ad^4 + 7\frac{1}{16}ad^3 - \frac{55}{16}ad^2 + \frac{81}{64}ad - \frac{33}{64}a + \frac{1}{4}d^5 - \frac{5}{8}d^4 + \frac{19}{16}d^3 - \frac{23}{32}d^2 + \frac{19}{64}d + \frac{23}{64},
    z^3 - z^2a + \frac{1}{2}za^2 + \frac{1}{2}z - \frac{1}{6}a^3 - \frac{1}{2}a + \frac{1}{3},
    a^6 + 9a^4 - 8a^3 + 27a^2 + 24ad - 48a + 24d^2 - 24d + 27,
    b + 1,
    c + 1\}$.
    
Therefore by Algorithm 1 the defining set  for the
quotient curve is 
$G_{2}=\{a^6 + 9a^4 - 8a^3 + 27a^2 + 24ad - 48a + 24d^2 - 24d + 27,~b+1,~c+1\}$.
The change of variables $y:=-\frac{1}{2}a - \frac{3}{2}d,~x=\frac{1}{2}(-a+1)$ defines a hyperelliptic curve given by 
$y^2 + (x^3 + x^2)y = -x^6 + 4x^5 - 25x^4 + 36x^3 - 36x^2 + 18x - 6$.\\ 
\end{proof}

Let  $V$ be a curve of genus $2$ over a field $k$. Let $Jac(V)$ be its Jacobian. We say that $J$ is split over $k$ if $J$ is isogenous over $k$ to a product of elliptic curves $E_{1} \times E_{2}$. 
\begin{theorem}(Kuhn\cite{Kuhn})\label{thm:nnsplit}
Let $J$ be a Jacobian of a curve $V$ of genus $2$ over a field $k$ of characteristic different from 2. Suppose that $J$ is split. 
Then there are elliptic curves $E_1$, $E_2$ over $k$, and an integer $n>1$ such that
$E_1[n]$ and $E_2[n]$ are isomorphic as group schemes and $J$ is
isogenous to $E_1\times E_2$. Furthermore, the curve $V$ admits degree $n$ covers $V\to E_1$ and $V\to E_2$.
\end{theorem}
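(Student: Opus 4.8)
The statement is Kuhn's theorem on genus-$2$ curves with split Jacobian, and the plan is to recover it from the Abel--Jacobi picture together with the theory of optimal covers of elliptic curves. The key observation is that a splitting of $J=\mathrm{Jac}(V)$ already forces $V$ to map non-constantly onto an elliptic curve, and then the complementary abelian subvariety inside $J$ produces a second cover whose degree and $n$-torsion are rigidly linked to those of the first.

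First I would exploit the hypothesis: fix elliptic curves $A_1,A_2$ over $k$ and an isogeny $\phi\colon A_1\times A_2\to J$. Composing an Abel--Jacobi map $\iota\colon V\to J$ (available over $k$, possibly using a $k$-rational divisor class of degree $1$, which does not affect the geometry) with the quotient $J\to J/\phi(\{0\}\times A_2)$ gives a morphism $g\colon V\to E$ onto an elliptic curve $E$ isogenous to $A_1$. This $g$ is non-constant: otherwise $\iota(V)$ would lie in a translate of the elliptic subvariety $\phi(\{0\}\times A_2)$, so the abelian subvariety of $J$ generated by $\iota(V)$ would be at most $1$-dimensional, contradicting that $\iota(V)$ generates $J$. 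Factoring out the largest isogeny in the target, I may replace $g$ by an \emph{optimal} cover $f_1\colon V\to E_1$ (one whose pullback $f_1^{*}\colon E_1\to J$ is a closed immersion), still with $E_1$ isogenous to $A_1$. Its degree $n$ is $>1$, since a degree-$1$ cover would be an isomorphism, impossible because $V$ has genus $2$.

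Next I would build the complementary cover. Put $E_1^{*}=f_1^{*}(E_1)\subset J$ and let $B$ be the complementary elliptic subvariety furnished by Poincar\'e reducibility, so $E_1^{*}+B=J$ with $E_1^{*}\cap B$ finite; then $B$ is isogenous to $A_2$, and $V\xrightarrow{\iota}J\to J/E_1^{*}$ is again non-constant, yielding an optimal cover $f_2\colon V\to E_2$ with $E_2$ isogenous to $B$; in particular $J\sim E_1\times E_2$. It remains to show $\deg f_2=n$ and $E_1[n]\cong E_2[n]$ as group schemes. For this I would use the identities $f_{1*}f_1^{*}=[n]_{E_1}$ and that $f_1^{*}f_{1*}$ is projection onto $E_1^{*}$: under $f_1^{*}\colon E_1\xrightarrow{\ \sim\ }E_1^{*}$ the restriction $f_{1*}|_{E_1^{*}}$ is multiplication by $n$, so $E_1^{*}\cap\ker f_{1*}$ has order $n^2$, one identifies $B$ with $(\ker f_{1*})^{0}$, and $E_1^{*}\cap B\cong E_1[n]$. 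Running the symmetric computation with $E_1$ and $B\sim E_2$ interchanged --- this is where $\mathrm{char}\,k\neq 2$ enters, to control the polarization types and the Weil pairings --- shows $f_2$ is optimal of degree $n$ and that $\ker\!\big(f_1^{*}\oplus f_2^{*}\colon E_1\times E_2\to J\big)$ is the graph of an isomorphism $\delta\colon E_1[n]\xrightarrow{\ \sim\ }E_2[n]$ (an anti-isometry for the Weil pairings). In particular $E_1[n]$ and $E_2[n]$ are isomorphic group schemes and $V$ carries the degree-$n$ covers $f_1,f_2$, as claimed.

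The main obstacle is this last step: showing the complementary cover has \emph{exactly} degree $n$ and that the relevant isogeny kernel is the graph of a group-scheme isomorphism. This is the technical heart; it requires tracking the canonical principal polarization on $J$, the polarizations it induces on $E_1^{*}$ and $B$, and the compatibility of Weil pairings under $f_i^{*}$ and $f_{i*}$ --- precisely why $\mathrm{char}\,k\neq 2$ is assumed. Everything preceding it (obtaining \emph{some} pair of covers and the isogeny $J\sim E_1\times E_2$) is soft, following from Poincar\'e reducibility and the fact that the Abel--Jacobi image generates $J$.
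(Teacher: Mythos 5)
You should know at the outset that the paper offers no proof of this statement: it is imported verbatim from Kuhn \cite{Kuhn} as an external input, so there is no internal argument to measure yours against. On its own terms, your sketch follows the modern abelian-variety route (essentially the Kani and Frey--Kani treatment via norm endomorphisms and complementary abelian subvarieties) rather than Kuhn's original argument, which works concretely with the hyperelliptic involution, the Weierstrass points, and the ramification of an explicit degree-$n$ cover $V\to E$. Your skeleton is sound: a splitting of $J$ does force a non-constant map from $V$ to an elliptic curve; one may pass to an optimal cover $f_1$ of some degree $n>1$; the complementary elliptic subvariety $B=(\ker f_{1*})^{0}$ with respect to the canonical principal polarization yields a second optimal cover; and the standard identities $f_{1*}f_1^{*}=[n]_{E_1}$ and $E_1^{*}\cap B\cong E_1^{*}[n]$ (so that $|E_1^{*}\cap B|=n^2$, forcing the exponents, hence the degrees, of the two covers to coincide), together with the identification of $\ker(f_1^{*}\oplus f_2^{*})$ with the graph of an anti-isometry $E_1[n]\to E_2[n]$, deliver exactly the stated conclusion. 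What each approach buys: yours generalizes cleanly to higher genus and makes the $(n,n)$-structure transparent; Kuhn's gives explicit models and the quadratic-cover constructions that the paper actually uses later (Theorem \ref{Splits}).

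Two caveats. First, you explicitly defer the technical heart --- the equality of the two degrees and the graph structure of the kernel --- so as written this is an outline rather than a proof, although the tools you name are the correct ones. Second, your attribution of the hypothesis $\mathrm{char}\,k\neq 2$ to ``polarization types and Weil pairings'' is not accurate: that bookkeeping works in any characteristic. In Kuhn's argument the hypothesis is used for the hyperelliptic geometry (separability of the degree-$2$ map to $\mathbb{P}^1$ and the configuration of Weierstrass points); in the abstract approach the only delicate point is treating $E_i[n]$ as group schemes when $p\mid n$, which your statement at least phrases correctly. There is also a rationality issue you pass over quickly: a genus-$2$ curve over $k$ need not carry a $k$-rational divisor class of degree $1$, so the Abel--Jacobi map is a priori only defined into a torsor, and producing $f_1,f_2$ as $k$-morphisms to elliptic curves over $k$ requires a further (standard but nontrivial) descent step.
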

Whenever the above happens, we call $Jac(V)$ a $(n,n)$-split, and 
we say $Jac(V)$ is $(n,n)$-isogenous to $E_1 \times E_2$

 Let $I_2$, $I_4$, $I_6$, and $I_{10}$ (see \cite{Igusa}) of a genus $2$ curve $V$. We define absolute invariants,
\begin{displaymath}
i_1 = 144\frac{I_4}{I_{2}^{2}},~~i_2 = -1728\frac{{I_2I_4 - 3I_6}}{I_{2}^{3}},~~ i_3 = 486\frac{I_{10}}{I_{2}^{5}}.
\end{displaymath}
\begin{theorem}(Bruin-Doerksen)\label{Briun}
The absolute invariants $i_1,i_2,i_3$ of a genus $2$ curve with optimally
 $(4,4)$-split Jacobian satisfy an equation $L_4=0$.
\end{theorem}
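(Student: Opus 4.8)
The statement is really an assertion about the image of the optimally $(4,4)$-split locus inside the coarse moduli space $\mathcal{M}_2$ of genus $2$ curves. The plan is to recall first that the triple $(i_1,i_2,i_3)$ forms a system of coordinates on a dense open subset of $\mathcal{M}_2$ (the complement of $I_2=0$), so that the conclusion ``$i_1,i_2,i_3$ satisfy $L_4=0$'' amounts to showing that the closure in this open set of the locus of moduli points of optimally $(4,4)$-split curves is a hypersurface, together with exhibiting its defining polynomial. Since $\mathcal{M}_2$ is three-dimensional and demanding a $(4,4)$-splitting is a single modular condition (the relevant Humbert-type surface is two-dimensional), it is a hypersurface on general grounds; the content is the explicit polynomial $L_4(i_1,i_2,i_3)$.

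The next step is to build an explicit two-parameter family realizing that locus. By Theorem~\ref{thm:nnsplit} a $(4,4)$-split Jacobian is isogenous to $E_1\times E_2$ with $E_1[4]\cong E_2[4]$ as group schemes, and the principally polarized abelian surface obtained by gluing $E_1$ to $E_2$ along the graph of an isomorphism $\phi\colon E_1[4]\xrightarrow{\sim}E_2[4]$ whose graph is maximal isotropic in $(E_1\times E_2)[4]$ for the Weil pairing (so the product principal polarization descends) is, away from the product locus, the Jacobian of a genus $2$ curve $V$ by the Torelli theorem; ``optimally'' split means $\phi$ does not factor through a compatible $2$-torsion gluing. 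Concretely I would put $E_1$ in a normal form depending on one parameter with a chosen basis of $E_1[4]$ marked, do the same for $E_2$ with a second parameter, take $\phi$ to match the marked bases, and then write down the two degree-$4$ maps $V\to E_1$ and $V\to E_2$ guaranteed by Theorem~\ref{thm:nnsplit}; composing with $V\to\mathbb{P}^1$ and eliminating recovers an explicit Weierstrass sextic $V_{s,t}$. Every member of this family has optimally $(4,4)$-split Jacobian by construction, and the family dominates the split locus because over $\overline{k}$ every such curve arises this way.

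Finally I would eliminate the parameters. Compute the Igusa--Clebsch invariants $I_2,I_4,I_6,I_{10}$ of $V_{s,t}$ as rational functions of $s,t$, form $i_1(s,t),i_2(s,t),i_3(s,t)$ as defined above, and eliminate $s,t$ from the three relations $i_k=i_k(s,t)$ using a Gr\"obner basis in $\overline{\mathbb{Q}}[s,t,i_1,i_2,i_3]$ (working with the numerators of $i_k-i_k(s,t)$ together with a saturation variable to clear denominators) and then intersecting with $\overline{\mathbb{Q}}[i_1,i_2,i_3]$. Because the family is two-dimensional and maps generically finitely onto a surface in $(i_1,i_2,i_3)$-space, this elimination ideal is, up to radical, generated by a single polynomial $L_4(i_1,i_2,i_3)$, and by construction every $V_{s,t}$ satisfies $L_4=0$. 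Since the $i_k$ are absolute invariants -- unchanged by isomorphism and by base extension -- and every genus $2$ curve over a field $k$ with $\operatorname{char}k\neq 2$ and optimally $(4,4)$-split Jacobian becomes isomorphic to some $V_{s,t}$ over $\overline{k}$, its invariants also satisfy $L_4=0$, which is the claim.

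The main obstacle is the middle step: producing honest equations for the $4$-torsion gluing and for the resulting genus $2$ model, and isolating the \emph{optimal} component. One must make the isotropy (anti-isometry) condition on $E_1[4]\cong E_2[4]$ explicit, verify that the descended polarization is genuinely principal, and discard the strata of parameter space that give products of elliptic curves, give curves with extra automorphisms (where $(i_1,i_2,i_3)$ degenerate), or correspond to non-optimal gluings; otherwise the elimination ideal acquires spurious factors and $L_4$ is not minimal. Treating the excluded loci $I_2=0$ and the product locus separately, and checking that the generic fibre of the map (family)$\to$(split locus) is finite of the expected degree, then completes the argument.
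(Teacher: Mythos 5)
The paper does not actually prove this statement: it is quoted as a theorem of Bruin and Doerksen, and the ``proof'' consists of the remark that $L_4$ is too large to reproduce together with a citation to \cite{Nils,Equation}, where the explicit polynomial and its derivation live. So there is no in-paper argument to compare yours against; the right comparison is with the cited work. Your outline is essentially the correct strategy and close in spirit to what Bruin and Doerksen do: exhibit a two-parameter family exhausting the optimally $(4,4)$-split locus, compute $I_2,I_4,I_6,I_{10}$ on it, and eliminate the parameters from the absolute invariants. The one substantive difference is the parametrization: you propose the Frey--Kani-style gluing of $E_1$ and $E_2$ along an anti-isometry of their $4$-torsion (descending the product polarization from a maximal isotropic subgroup), whereas the cited source follows Kuhn's dual picture, parametrizing genus $2$ curves by the degree-$4$ covers $V\to E$ of Theorem~\ref{thm:nnsplit} and their ramification data, which is what makes the family explicit enough to feed into the invariant computation. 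Both routes parametrize the same surface in $\mathcal{M}_2$, and your dimension count already suffices for the bare existence of \emph{some} relation $L_4=0$. The gap, which you candidly flag yourself, is that everything this theorem is used for later in the paper (checking that the numerical invariants $i_1=2823/1600$, etc., satisfy $L_4=0$) requires the explicit polynomial, and your proposal defers exactly the steps that produce it: writing down $V_{s,t}$, verifying the descended polarization is principal, excising the product, extra-automorphism, and non-optimal strata so the elimination ideal is not polluted, and carrying out the elimination. As an existence argument your sketch is sound; as a replacement for the cited computation it is a plan rather than a proof.
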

The equation $L_4$ is to big to be reproduced here. See \cite{Nils,Equation} for proof and the equation $L_{4}$.

The projective equation of  $C/(1,2,3)$ is given by   $y^2z^4 + (x^3z^2 + x^2z^3)y = -x^6 + 4x^5z - 25x^4z^2 + 36x^3z^3 - 36x^2z^4 + 18xz^5 - 6z^6$.
Performing the birational subsitution  $(x,y,z)\rightarrow (x,x^3,x^2z+2y)$  and then putting $z=1$ gives the Weierstrass form\\
\begin{displaymath}
 y^2 = -3x^6 - 18x^5 - 99x^4 - 144x^3 - 144x^2 - 72x - 24.
\end{displaymath}

The Igusa invariants are $I_{2}=-138240$, $I_{4}= 234150912$, $I_{6}= -448888946688$, $I_{10}=-12999674453557248$. This gives the absolute invariants to be
$i_{1}=2823/1600$, $i_{2}=2597331/128000$,
 $i_{3}=6561/52428800000$. It can be checked using any computer algebra package
 that the absolute invariants satisfy the equation $L_4=0$.  Hence by theorem \ref{Briun} we strongly suspected that $Jac(C/(1,2,3))$ is a $(4,4)$- split.
We now prove this.
\begin{theorem} \label{Splits}
Let $E_1$ be the elliptic curve $C/(1,2)$, and let $E_2$ be the elliptic curve over $\overline{\mathbb{Q}}$ defined by
$y^2+xy=x^3-x^2-6x+8$. Then Jacobian of $C/(1,2,3)$ is $(4,4)$-isogenous to $E_1 \times E_2$.
\end{theorem}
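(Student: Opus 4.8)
The plan is to realize the genus-$2$ curve $V:=C/(1,2,3)$ as a \emph{minimal} degree-$4$ cover of an elliptic curve isomorphic to $E_2$, to apply the theory of minimal elliptic subcovers of genus-$2$ curves (Kani; see Theorem~\ref{thm:nnsplit}) to produce the complementary degree-$4$ cover, and then to identify the complementary curve with $E_1=C/(1,2)$ over $\overline{\mathbb{Q}}$ by comparing $j$-invariants. The first step is explicit: $(1,2,3)$ is an even permutation, so $\langle(1,2,3)\rangle$ is a subgroup of the alternating group $A_4\le S_4\le Aut(C)$ of index $4$, and the tower of quotients therefore gives a finite morphism $\pi\colon V=C/\langle(1,2,3)\rangle\to C/A_4$ of degree $4$. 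Applying Algorithm~\ref{A1} to the group $A_4$ (in MAGMA) yields defining equations for $C/A_4$; one finds that $C/A_4$ has genus $1$ and that a Weierstrass model read off from those equations has $j$-invariant $-35937/4=j(E_2)$, so that $C/A_4\cong E_2$ over $\overline{\mathbb{Q}}$. Hence $\pi$ exhibits $V$ as a degree-$4$ elliptic subcover of $E_2$.

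I would next verify that $\pi$ is minimal. A degree-$4$ cover of an elliptic curve from a genus-$2$ curve factors nontrivially only as $V\to\widetilde E\xrightarrow{2}C/A_4$ with $V\to\widetilde E$ a degree-$2$ cover of an elliptic curve, so $\pi$ is minimal unless $V$ is bielliptic. To exclude this I would show $Aut(V)=\mathbb{Z}/2\mathbb{Z}$: the automorphisms of $V$ induced from $S_4$ are exactly those coming from $N_{S_4}(\langle(1,2,3)\rangle)/\langle(1,2,3)\rangle\cong\mathbb{Z}/2\mathbb{Z}$, whose generator $\bar\tau$ (the image of $(1,2)$) has quotient $V/\bar\tau=C/\langle(1,2),(1,2,3)\rangle$, a curve that Algorithm~\ref{A1} shows to have genus $0$; so $\bar\tau$ is the hyperelliptic involution, and a check that the Igusa invariants of $V$ recorded above do not lie on the (closed, proper) locus of genus-$2$ curves with larger automorphism group completes the argument — alternatively MAGMA can confirm minimality of $\pi$ directly. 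Granting minimality, Kani's theory attaches to $\pi$ a complementary minimal degree-$4$ cover $\pi'\colon V\to E'$, an isogeny $Jac(V)\sim E_2\times E'$, and a group-scheme isomorphism $E_2[4]\cong E'[4]$ (the canonical anti-isomorphism between the kernels of the norm maps $Jac(V)\to E_2$ and $Jac(V)\to E'$); this is exactly the assertion that $Jac(V)$ is $(4,4)$-split.

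It remains to identify $E'$ with $E_1=C/(1,2)$, and over $\overline{\mathbb{Q}}$ this reduces to the check $j(E')=j(E_1)=-36$, which I would carry out by computing $E'$ explicitly — from Kani's formulas for the cover data of $\pi$, or directly in MAGMA from the hyperelliptic model of $V$ — and reading off its $j$-invariant. As an independent confirmation: using the genera of $C/S_4$, $C/(1,2)$, $C/(1,2,3)$ and $C/(1,2,3,4)$ together with the genus $1$ of $C/A_4$ just found, one checks that the only irreducible $S_4$-characters occurring in $H^0(C,\Omega^1)$ are the sign character and $\mathrm{standard}\otimes\mathrm{sign}$; for these a count of the dimensions of fixed subspaces gives the idempotent relation $\epsilon_{\langle(1,2,3)\rangle}\sim\epsilon_{A_4}+\epsilon_{\langle(1,2)\rangle}$ in $End_0(J_C)$, so Theorem~\ref{Kani} yields $Jac(V)\sim(C/A_4)\times(C/(1,2))\sim E_2\times E_1$, corroborating both the decomposition and the identification $C/A_4\cong E_2$. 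Combining these, $Jac(C/(1,2,3))$ is $(4,4)$-isogenous to $E_1\times E_2$, in accordance with the verification above that the absolute invariants of $V$ satisfy $L_4=0$ (Theorem~\ref{Briun}). The step I expect to be the main obstacle is the rigorous proof of minimality of $\pi$ — i.e. ruling out that the split is really a $(2,2)$-split seen through $2$-isogenies — together with the explicit computation of $E'$; once these are settled the torsion isomorphism and the final assembly are formal.
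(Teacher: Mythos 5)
Your argument is sound but takes a genuinely different route from the paper's. The paper proceeds by explicit Richelot computation: it writes $C/(1,2,3)$ as $y^2=-3F$, factors $F=F_1F_2F_3$ into quadratics over a sextic number field, checks $\delta\neq 0$, passes to the Richelot-dual curve $\tilde C\colon y^2=2x^6+6x^5+15x^4+18x^3+15x^2+6x+2$, and observes that this palindromic sextic is bielliptic, so that $Jac(C/(1,2,3))$ reaches $E_1\times E_2$ as a composite of two $(2,2)$-isogenies. You instead locate the degree-$4$ cover group-theoretically, as the quotient map $C/\langle(1,2,3)\rangle\to C/A_4$, identify $C/A_4$ with $E_2$, and invoke Kani's complementary-cover theory after proving minimality. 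Your route has two genuine advantages: it explains conceptually where $E_2$ comes from (in the paper it appears only as the output of a change of variables), and, once minimality is in hand, it delivers the full $(4,4)$-structure ($E_2[4]\cong E'[4]$ together with degree-$4$ covers) from general theory, whereas the paper never verifies that its two successive $(2,2)$-isogenies compose to a genuine $(4,4)$-isogeny rather than degenerating, nor that the two cubics it ends with are the stated $E_1$ and $E_2$. Note also that your computational claim $g(C/A_4)=1$ is already forced by data in the paper: Riemann--Hurwitz applied to the degree-$4$ map from the genus-$2$ curve gives $g(C/A_4)\le 1$, while the character decomposition of $H^0(C,\Omega^1)$ determined by the genera of $C/S_4$, $C/(1,2)$, $C/(1,2,3)$ and $C/(1,2,3,4)$ forces the sign character to occur, giving $g(C/A_4)\ge 1$; the same bookkeeping pins the decomposition to $\mathrm{sgn}\oplus(\mathrm{std}\otimes\mathrm{sgn})$ and justifies your idempotent relation, since the characters on which it fails do not occur in $H^1(C)$. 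What you defer to MAGMA --- the $j$-invariants of $C/A_4$ and of the complementary curve $E'$, and $Aut(C/(1,2,3))\cong\mathbb{Z}/2\mathbb{Z}$ --- is comparable in kind to the computations the paper itself relies on, and you correctly single out minimality (ruling out a hidden $(2,2)$-split through a bielliptic involution) as the one step requiring real care.
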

\begin{proof}
We will use the method given in section 4 of \cite{Nils}. We first prove the existence of $(2,2)$ Richelot isogeny from $Jac(C/(1,2,3))$ to a certain $(2,2)$-split Jacobian, say $B$,  and then construct it.
The hyperelliptic curve $C/(1,2,3)$ is given by $y^2=-3F$ where  $F=x^6 + 6x^5 + 33x^4 + 48x^3 + 48x^2 + 24x + 8$. The splitting field of the polynomial $F$ has is the cubic number field $S$ defined by $x^3-3x+4$ and discriminant $d=-324$.
Therefore we get $F=F_1F_2F_3$ 
where 

$F_1=x^2 + \frac{1}{18}(a^4 - 3a^2)x + \frac{1}{18}(a^4 - 9a^2 - 18)$, \\
$F_2=x^2 + \frac{1}{36}(-a^5 - a^4 + 15a^3 + 3a^2 - 72a + 108)x + \frac{1}{72}(-a^5 - 2a^4 +15a^3 + 18a^2 - 108a - 72),$ \\
$F_3=x^2 + \frac{1}{36}(a^5 - a^4 - 15a^3 + 3a^2 + 72a + 108)x + \frac{1}{72}(a^5 - 2a^4 - 15a^3+ 18a^2 + 108a - 72)$, \\
where $a$ satisfies $x^6-18x^4+81u^2+324$.
If we write $F_j(X)=q_2 X^2+ q_{1,j} X+ q_{0,j}$\\
then we define\\
\begin{equation}\nonumber
\delta:=\det
\begin{pmatrix}
 q_{0,1}& q_{1,1} & q_{2}\\
 q_{0,2}& q_{1,2} & q_{2}\\
 q_{0,3}& q_{1,3} & q_{2}\\
\end{pmatrix} = 2a^3-18a.
\end{equation}
Since $\delta \neq 0$,  $B$ is the Jacobian of genus $2$ curve.
We will find the equation of a  curve $\tilde{C}$ such that  $Jac(\tilde{C})=B$.
We define for  $(i,j,k)=(1,2,3),(2,3,1),(3,1,2)$
$$G_i(X)=\delta^{-1}\det
\begin{pmatrix}
  \frac{d}{dX}F_j(X)& \frac{d}{dX}F_k(X)\\
  F_j(X)            & F_k(X)
\end{pmatrix}.
$$
Hence,
$ G_1=\frac{1}{648}(-a^4 + 27a^2 - 108)x^2 + \frac{1}{216}(-a^4 + 15a^2 - 36)x + \frac{1}{162}(-a^4 +18a^2 - 108)$,\\
 $G_2=\frac{1}{1296}(-a^5 + a^4 + 15a^3 - 27a^2 + 108)x^2 + \frac{1}{432}(a^4 - 15a^2 + 36a + 36)x+ \frac{1}{1296}(-a^5 + 4a^4 + 15a^3 - 72a^2 + 108a - 216)$,\\
and  $G_3=\frac{1}{1296}(a^5 + a^4 - 15a^3 - 27a^2 + 108)x^2 + \frac{1}{432}(a^4 - 15a^2 - 36a + 36)x+ \frac{1}{1296}(a^5 + 4a^4 - 15a^3 - 72a^2 - 108a - 216)$.\\
Hence we get $\tilde{C}$ defined by $y^2=dG_1G_2G_3$,  i.e.,
$y^2=f(x)=2x^6 + 6x^5 + 15x^4 + 18x^3 + 15x^2 + 6x + 2$.

To see that $\tilde{C}$ is a  $(2,2)$ cover of elliptic curves (say $E_1$ and $E_2$) make the change of variable $x \rightarrow \frac{x+1}{x-1}$ 
we get  $f=\frac{64x^6 + 36x^4 + 24x^2 + 4}{x^6 - 6x^5 + 15x^4 - 20x^3 + 15x^2 - 6x + 1}$
Put $u=x^2$ in the numerator of $f$ we get $y^2=64u^3+36u^2+24u+4$  as one of the elliptic curve say $E_1$, the covering map is given by $(x,y) \rightarrow (x^2, y)$.
Then by  \cite{Cassels} p.155, $E_2$ is given by $y^2=4u^3+24u^2+3 6u+64$ and the covering map is given by $(x,y)\rightarrow (\frac{1}{x^2}, \frac{y}{x^3})$.

\end{proof}

\section{Arithmetic Properties of The Quotients that arise from the Splitting}\label{reductions}

When we write $L(J_{C})$ we mean the L-polynomial of the reduction modulo $p$ 
of the Jacobian of $C$.  
The Jacobian has good reduction because $C$ does.

\begin{theorem}\label{lpo} 
Over $\mathbb{F}_{p},~ p\geq 5$,  
$$L(J_{C}) =  L(J_{C/(1,2)}) \times L(J_{C/(1,2,3)}) \times L(J_{C/(1,2,3,4)}).$$
\end{theorem}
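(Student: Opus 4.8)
The plan is to reduce the claimed factorisation of $L$-polynomials to an isogeny of Jacobians over $\mathbb{F}_p$, and to produce that isogeny by applying the Kani--Rosen theorem to the action of $S_4\leq Aut(C)$. By Weil's theorem (recalled in Section~\ref{bg}) $L(J_C)$ is the reciprocal polynomial of $f_{J_C}(t)$, and $f$ is multiplicative for products of abelian varieties, so the assertion is equivalent to
\[
J_C \ \sim\ J_{C/(1,2)}\times J_{C/(1,2,3)}\times J_{C/(1,2,3,4)}
\]
over $\mathbb{F}_p$; Theorem~\ref{tate} is exactly the dictionary turning such an isogeny into equality of characteristic polynomials of Frobenius, hence of $L$-polynomials. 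Since each of $(1,2),(1,2,3),(1,2,3,4)$ is a coordinate permutation, $C$, the three quotient maps, and the idempotents below are all defined over $\mathbb{Q}$, so it is enough to prove this isogeny over $\mathbb{Q}$ and then reduce modulo $p$.

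The heart of the proof is an idempotent relation in $\mathbb{Q}[S_4]$. With $\epsilon_H=\tfrac1{|H|}\sum_{h\in H}h$ as in Section~\ref{bg}, I would verify
\[
\epsilon_{\{1\}} \;+\; 2\,\epsilon_{S_4}\ \sim\ \epsilon_{\langle(1,2)\rangle} \;+\; \epsilon_{\langle(1,2,3)\rangle} \;+\; \epsilon_{\langle(1,2,3,4)\rangle},
\]
meaning the two sides agree when evaluated on every irreducible character of $S_4$. Using $\chi(\epsilon_H)=\tfrac1{|H|}\sum_{h\in H}\chi(h)=\dim M_\chi^{H}$, one tabulates $\dim M_\chi^{H}$ for $H\in\{\{1\},S_4,\langle(1,2)\rangle,\langle(1,2,3)\rangle,\langle(1,2,3,4)\rangle\}$ on the five irreducibles of $S_4$ (trivial, sign, the $2$-dimensional representation, the standard $3$-dimensional representation and its twist by the sign) and checks the five resulting equalities $3=1+1+1$, $1=0+1+0$, $2=1+0+1$, $3=2+1+0$, $3=1+1+1$. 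Because the relation holds on all irreducible characters of $S_4$, it passes forward to the relation $\sim$ in $End_0(J_C)$ (any $\overline{\mathbb{Q}}$-character of $End_0(J_C)$ restricts to a non-negative combination of those irreducible characters of $S_4$ that occur in $H^1(C)$). Theorem~\ref{Kani}, together with $\epsilon_H(J_C)\sim J_{C/H}$, then gives $J_C\times J_{C/S_4}\times J_{C/S_4}\sim J_{C/(1,2)}\times J_{C/(1,2,3)}\times J_{C/(1,2,3,4)}$; by Theorem~\ref{genus0} the curve $C/S_4$ has genus $0$, so $J_{C/S_4}=0$ and the displayed isogeny drops out. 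Since the $\epsilon_H$ and the quotient maps are $\mathbb{Q}$-rational, it holds already over $\mathbb{Q}$ (consistently with $g(C)=4=1+2+1$ from Theorem~\ref{GENUS}).

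To finish I would descend modulo $p$. For $p\geq5$ the curve $C$ is nonsingular with good reduction (Lemma of Section~\ref{firstthings}), hence $J_C$ has good reduction; since $p\nmid|S_4|=24$, each quotient curve $C/H$ and its Jacobian also have good reduction, and reduction commutes with passing to the Jacobian. An isogeny over $\mathbb{Q}$ between abelian varieties with good reduction at $p$ reduces to an isogeny over $\mathbb{F}_p$, so the displayed isogeny holds over $\mathbb{F}_p$. Applying Theorem~\ref{tate} gives $f_{J_C}=f_{J_{C/(1,2)}}\cdot f_{J_{C/(1,2,3)}}\cdot f_{J_{C/(1,2,3,4)}}$ in $\mathbb{Q}[t]$, and passing to reciprocal polynomials (equivalently, using that the $L$-polynomial of a product of abelian varieties is the product of the $L$-polynomials) yields the stated identity.

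I expect the real content to be in isolating the correct idempotent relation — in particular the multiplicity $2$ on $\epsilon_{S_4}$, which is exactly what forces the trivial and the standard $3$-dimensional characters to balance; a priori one might fear that further quotient curves must enter the decomposition, and it is the character bookkeeping above that rules this out. The reduction-mod-$p$ step is standard, but should be made explicit because the theorem is asserted uniformly for every $p\geq5$.
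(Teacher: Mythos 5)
Your proof is correct, and it follows the same overall strategy as the paper (Kani--Rosen applied to $S_4\leq Aut(C)$, killing the genus-$0$ quotient $C/S_4$, then reducing modulo $p$), but the key idempotent relation you use is genuinely different. The paper works with the exact identity in $\mathbb{Q}[S_4]$
\[
12\epsilon_{id}+24\epsilon_{S_4}=2\sum_{i=1}^{6}\epsilon_{\sigma_i}+3\sum_{j=1}^{4}\epsilon_{\tau_j}+4\sum_{k=1}^{3}\epsilon_{F_k},
\]
summing over all cyclic subgroups generated by transpositions, $3$-cycles and $4$-cycles; it then invokes the conjugacy of subgroups of a given cycle type to identify the corresponding quotient Jacobians, obtains $J_C^{12}\sim J_{C/(1,2)}^{12}\times J_{C/(1,2,3)}^{12}\times J_{C/(1,2,3,4)}^{12}$, and extracts a $12$th root at the level of $L$-polynomials. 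You instead use the multiplicity-free relation $\epsilon_{\{1\}}+2\epsilon_{S_4}\sim\epsilon_{\langle(1,2)\rangle}+\epsilon_{\langle(1,2,3)\rangle}+\epsilon_{\langle(1,2,3,4)\rangle}$, which is not an identity in the group algebra but holds on all five irreducible characters of $S_4$ (your table of fixed-space dimensions checks out), and as you correctly note this suffices for the relation $\sim$ in $End_0(J_C)$ because every $\overline{\mathbb{Q}}$-character of $End_0(J_C)$ pulls back to a genuine character of $S_4$. What each approach buys: the paper's identity needs no character theory to verify and no argument that character-level relations suffice, at the cost of the conjugation bookkeeping and the root extraction (which itself needs unique factorization in $\mathbb{Z}[t]$); yours delivers the isogeny $J_C\sim J_{C/(1,2)}\times J_{C/(1,2,3)}\times J_{C/(1,2,3,4)}$ directly with no exponents to cancel, and is also a little more explicit than the paper about why the isogeny over $\mathbb{Q}$ reduces to one over $\mathbb{F}_p$. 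Both arguments are sound.
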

 \begin{proof}
 First we will work over $\overline{\mathbb{Q}}$.
 By Lemma \ref{GENUS}, C is a non-singular over $\overline{\mathbb{Q}}$.
 The automorphism group of $C$ contains the symmetric group on $4$ symbols i.e; $S_4\leq Aut(C)$.
By the symmetry of the curve, one can conclude that $\epsilon_{<\sigma>}(J_C) \sim  \epsilon_{<\tau > }(J_C)$ if $\sigma$ and $\tau$ are of same cycle type.

In $S_4$ there are
\begin{enumerate}
\item six cyclic subgroups of order 2, say $\sigma_{1}, \ldots , \sigma_{6}$.
\item four cyclic subgroups of order 3, say $\tau_{1}, \ldots , \tau_{4}$.
\item three cyclic subgroups of order $4$, say $F_{1} \ldots , F_{3}$.
\end{enumerate}
We have the following idempotent relation in $S_4$, namely 
\begin{displaymath}
12\epsilon_{id}+ 24\epsilon_{{S_{4}}}=2[\epsilon_{\sigma_{1}}+\ldots \epsilon_{\sigma_6}]+3[\epsilon_{\tau_{1}}+\ldots \epsilon_{\tau_4}]+4[\epsilon_{F_1}+ \dots +\epsilon_{F_3}],
\end{displaymath}
which  by Kani-Rosen decomposition implies
\begin{displaymath}
\epsilon_{id}(J_C)^{12} \times \epsilon_{{S_{4}}}(J_C)^{24} \sim \epsilon_{\sigma_{1}}(J_C)^2\times \ldots \times \epsilon_{\sigma_6}(J_C)^2 \times \epsilon_{\tau_{1}}(J_C)^3 \times \ldots \epsilon_{\tau_4}(J_C)^3 \times \epsilon_{F_1}(J_C)^4\times \dots \times \epsilon_{F_3}(J_C)^4.
\end{displaymath}
This gives the following isogeny over $\overline{\mathbb{Q}}$
\begin{displaymath}
J_{C}^{12} \times   J_{C/S_4}^{24}\sim  J_{C/(1,2)}^{12} \times J_{C/(1,2,3)}^{12} \times J_{C/(1,2,3,4)}^{12}.
\end{displaymath}
Reducing this isogeny modulo $p\geq 5$ gives an isogeny over $\mathbb{F}_p$, which means 
that the $L$-polynomials satisfy
 \[
 L(J_{C})^{12}\times L(J_{C/S_4})^{12} =  L(J_{C/(1,2)})^{12} \times L(J_{C/(1,2,3)})^{12}\times L(J_{C/(1,2,3,4)})^{12}.
 \]
 But $\dim (J_{C/S_4})=0$ from Theorem \ref{genus0}
 which implies that $L((J_{C/S_4}))=1$.
 By taking the $12th$-root on both sides we get the result.
\end{proof}

\begin{corollary}\label{c1}
The modulo  $p$ reductions of $J_C$ and 
$J_{C/(1,2)} \times J_{C/(1,2,3)} \times J_{C/(1,2,3,4)}$ are isogenous
over $\mathbb{F}_p$.
\end{corollary}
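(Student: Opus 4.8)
The plan is to deduce the corollary directly from Theorem \ref{lpo} together with Tate's theorem (Theorem \ref{tate}). The key point is that Theorem \ref{lpo} already establishes the equality of $L$-polynomials
\[
L(J_C) = L(J_{C/(1,2)}) \cdot L(J_{C/(1,2,3)}) \cdot L(J_{C/(1,2,3,4)}),
\]
and since the $L$-polynomial of a product of abelian varieties is the product of the $L$-polynomials of the factors, the right-hand side is precisely $L\bigl(J_{C/(1,2)} \times J_{C/(1,2,3)} \times J_{C/(1,2,3,4)}\bigr)$. Thus $J_C$ and $J_{C/(1,2)} \times J_{C/(1,2,3)} \times J_{C/(1,2,3,4)}$ have the same $L$-polynomial over $\mathbb{F}_p$, equivalently (since $f_A(t)$ is the reciprocal of the $L$-polynomial) the same characteristic polynomial of Frobenius.

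The second and final step is to invoke Tate's theorem: two abelian varieties over $\mathbb{F}_q$ with $f_A(t) = f_B(t)$ are $\mathbb{F}_q$-isogenous. Applying this with $q = p$, $A = J_C$ and $B = J_{C/(1,2)} \times J_{C/(1,2,3)} \times J_{C/(1,2,3,4)}$ yields the claimed isogeny over $\mathbb{F}_p$. One should note that both sides have good reduction modulo $p \geq 5$ — for $J_C$ this is the Lemma on nonsingularity and good reduction, and for the three quotients it follows because each quotient curve has good reduction (being a quotient of $C$ by an automorphism of order prime to $p$), so the statement about the reductions is well-posed.

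There is essentially no obstacle here; the corollary is a formal consequence of the preceding theorem. The only minor point to be careful about is the bookkeeping between $L$-polynomials and characteristic polynomials of Frobenius — Tate's theorem as stated in the excerpt is phrased in terms of $f_A(t)$, and one needs the (standard, and recalled in the Background section) fact that $f_A(t)$ is the reciprocal polynomial of $L(t,C)$, together with multiplicativity of both under products of abelian varieties, to translate the equality of Theorem \ref{lpo} into the hypothesis of Theorem \ref{tate}. With that translation in place the isogeny over $\mathbb{F}_p$ follows immediately.
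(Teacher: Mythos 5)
Your proof is correct and follows exactly the paper's own route: Theorem \ref{lpo} gives the equality of $L$-polynomials, multiplicativity under products identifies the right-hand side with $L$ of the product variety, and Tate's theorem converts equality of characteristic polynomials of Frobenius into an $\mathbb{F}_p$-isogeny. The paper's proof is a one-line version of the same argument, so there is nothing further to add.
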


\begin{proof} 
By Theorem \ref{lpo} and  Tate's theorem, we can conclude that there is an $\mathbb{F}_p$-isogeny 
between (the mod $p$ reductions of) $J_C$ and 
$J_{C/(1,2)} \times J_{C/(1,2,3)} \times J_{C/(1,2,3,4)}$.
\end{proof}

\bigskip

From the equations computed earlier of the  quotient curves defined over $ \overline{\mathbb{Q}}$,
we tabulate their L-polynomials
 over $\mathbb{F}_{p}$ for few primes $p$ below.
\bigskip

\begin{tabular}{|l|l|l|l|l|}
\hline
 $p$& $ L(J_{C/(1,2)})$ & $L(J_{C/(1,2,3)})$ & $L(J_{C/(1,2,3,4)})$& $p$-rank  \\ \hline
$5$&   $(5t^2-t+1)$    & $(5t^2-t+1)(5t^2+3t+1)   $&$(5t^2-t+1)    $&$4$\\  \hline
$7$&    $7t^2 + 1 $    & $(7t^2+1)(7t^2+4t+1)     $&$7t^2 + 1      $&$1$\\  \hline
$11$  & $11t^2-4t+1$   & $(11t^2-4t+1)(11t^2 + 1) $&$11t^2 - 4t + 1$&$3$\\  \hline
$13$  & $13t^2+5t+1$   & $(13t^2+t+1)(13t^2+5t+1) $&$13t^2 + 5t + 1$&$4$\\  \hline
$17$  & $17t^2-5t+1$   & $(17t^2-5t+1)(17t^2+3t+1)$&$17t^2 - 5t + 1$&$4$\\  \hline
$19$  & $19t^2-8t+1$   & $(19t^2-8t+1)(19t^2+4t+1)$&$19t^2 - 8t + 1$ &$4$\\  \hline
$23$  & $23t^2-4t+1$   & $(23t^2-4t+1)(23t^2+1)   $&$23t^2 - 4t + 1$&$3$\\ \hline
$29$  & $29t^2+3t+1$   & $(29t^2-9t+1)(29t^2+3t+1)$&$29t^2 + 3t + 1$&$4$\\ \hline
$31$  & $31t^2+4t+1$   & $(31t^2+4t+1)^2          $&$31t^2 + 4t + 1$&$4$\\ \hline
$37$  & $37t^2-3t+1$   & $(37t^2+t+1)(37t^2-3t+1) $&$37t^2-3t + 1  $&$4$\\ \hline
$37$  & $37t^2-3t + 1$ &  $(37t^2 - 3t + 1)(37t^2 + t + 1)$ & $37t^2 - 3t + 1$ &$4$\\  \hline
$41$  & $ 41t^2 - 6t +1$ &  $(41t^2 - 6t + 1)^2$ & $41t^2 - 6t + 1$&$4$ \\  \hline
$43$  & $43t^2 - 4t + 1$ &  $(43t^2 - 8t + 1)(43t^2 - 4t + 1)$ & $43t^2 - 4t + 1$ &$4$\\  \hline
$47$  & $47t^2 - 12t + 1$&$(47t^2 - 12t + 1)(47t^2 + 12t + 1)$&$47t^2 - 12t + 1 $&$4$ \\  \hline
$53$  & $53t^2 - 10t + 1$&$(53t^2 - 10t + 1)(53t^2 + 6t + 1)$&$53t^2 - 10t + 1$&$4$ \\  \hline
$59$  & $59t^2 + 8t + 1$ &$(59t^2 + 1)(59t^2 + 8t + 1)$& $59t^2 + 8t + 1$ &$3$\\  \hline
$61$  & $61t^2 + 5t + 1$&$(61t^2 + t + 1)(61t^2 + 5t + 1)$&$61t^2 + 5t + 1$&$4$ \\  \hline
$67$  & $67t^2 - 8t + 1$&$(67t^2 - 8t + 1)(67t^2 + 4t + 1)$&$67t^2 - 8t + 1$&$4$ \\  \hline
$71$  & $71t^2 + 16t + 1$&$(71t^2 + 12t + 1)(71t^2 + 16t + 1)$&$71t^2 + 16t + 1$ &$4$\\  \hline
$73$  & $73t^2 + 5t + 1$&$(73t^2 - 11t + 1)(73t^2 + 5t + 1)$&$73t^2 + 5t + 1$&$4$ \\  \hline
$79$  & $79t^2 - 4t + 1$&$(79t^2 - 4t + 1)(79t^2 + 16t + 1)$&$79t^2 - 4t + 1$ &$4$\\  \hline
$83$  & $83t^2 + 4t + 1$&$(83t^2 + 4t + 1)(83t^2 + 12t + 1)$&$83t^2 + 4t + 1$ &$4$\\  \hline
$89$  & $89t^2 + 3t + 1$&$(89t^2 + 3t + 1)^2$&$89t^2 + 3t + 1$&$4$ \\  \hline
$97$  & $97t^2 - 2t + 1$&$(97t^2 - 2t + 1)^2$ & $97t^2 - 2t + 1$&$4$ \\  \hline
$101$ & $101t^2 + 6t + 1$&$(101t^2 + 6t + 1)^2$ & $101t^2 + 6t + 1$&$4$ \\  \hline
$103$ & $103t^2 - 4t + 1$&$(103t^2 - 4t + 1)(103t^2 + 4t + 1)$ & $103t^2 - 4t + 1$&$4$ \\  \hline

\end{tabular}\\

  \begin{remark}
Theorem \ref{Splits} implies that
over $\mathbb{F}_{p}$, for all $p \geq 5$,  the
Jacobian of $C/(1,2,3)$ (reduced modulo $p$) is isogenous to the 
product of $\overline{E_1}$ and $\overline{E_2}$, 
where $\overline{E}$ denotes the reduction of $E$ modulo  $p$. 
By Corollary \ref{c1}  we get  that
\begin{displaymath}
\overline{J_C}\sim \overline{E_{1}}^3 \times \overline{E_{2}}.
\end{displaymath}
\end{remark}

 \begin{remark}
For $p=31,41, 89, 97,101$, the table shows that
$\overline{E_1}$ is isogenous to $\overline{E_2}$, and so
\begin{displaymath}
\overline{J_C}\sim \overline{E_{1}}^4.
\end{displaymath}
It would be interesting to know how often this happens.
If $\overline{J_C} \sim \overline{E_1} \times \overline{E_2}$ has L-polynomial
$q^2 t^4+qa_1t^3+a_2t^2+a_1t+1$ then $\overline{E_1}$ is isogenous to 
$\overline{E_2}$ if and only if $a_1^2 -4a_2+8q=0$, but we cannot see
how to predict  for which $p$ this will happen.
\end{remark}

\section{Number of Rational Points}\label{pts}

It would be nice to have a formula for the number of rational points 
on the reduction of $C$ modulo $p$.  However, our investigations show that
this is unlikely.

The table below gives the number of rational points of $C$ for some values of $q$ 
and the Hasse -Weil-Serre bounds.\\

\begin{tabular}{|l|l|l|l|}
\hline
$p$&Lower bound& $  \# C(\mathbb{F}_q)$ & Upper bound\\ \hline
5&-10&6&22 \\ \hline
7&-12&12&28 \\ \hline
11&-12&0&36 \\ \hline
13&-14&30&42 \\ \hline
17&-14&6&50 \\ \hline 
19&-12&0&52\\ \hline
23&-12&12&60\\ \hline
29&-10&30&70\\ \hline
31&-12&48&76\\ \hline
37&-10&30&86\\ \hline
41&-6&18&90\\ \hline
43&-8&24&96\\ \hline
47&-4&24&100\\ \hline
53&-2&30&110\\ \hline
59&0&84&120\\ \hline
61&2&78&122\\ \hline
67&4&48&132\\ \hline
71&8&132&136\\ \hline
73&6&78&142\\ \hline
79&12&84&148\\ \hline
83&12&108&156\\ \hline
89&18&102&162\\ \hline
97&22&90&174\\ \hline
101&22&126&182\\ \hline
103&24&96&184\\ \hline
\end{tabular}\\
 
\begin{remark}
\begin{enumerate}
\item For $p=71$ the defect is only $4$, which is very good.
We do not know if this is best possible   for genus $4$ over that field.
\item For $p= 11,19$ the curve $C$ has no points.    
\end{enumerate}
\end{remark}

Because we have taken explicit exponents $a=2, b=3$ in this article,
many things can be computed explicitly.  
In future work we will investigate other values of $a$ and $b$.

\end{document}